\newcommand{\X}{\mathcal{X}}
\newcommand{\arr}{\longrightarrow}
\newcommand{\G}{\mathfrak{G}}
\newcommand{\wt}{\widetilde}
\newcommand{\be}{\mathsf{o}}
\newcommand{\en}{\mathsf{t}}
\newcommand{\mapdown}[1]%
{\Big\downarrow\rlap{$\vcenter{\hbox{$\scriptstyle#1$}}$}}
\newcommand{\N}{\mathbb{N}}
\newcommand{\R}{\mathbb{R}}
\newcommand{\Z}{\mathbb{Z}}
\newtheorem{theorem}{Theorem}[section]
\newtheorem{lemma}[theorem]{Lemma}
\newtheorem{proposition}[theorem]{Proposition}
\theoremstyle{definition}
\newtheorem{defi}{Definition}[section]
\newtheorem{examp}{Example}[section]
\title{Growth of \'etale groupoids and simple algebras}
\author{Volodymyr Nekrashevych}
\begin{document}

\maketitle

\begin{abstract}
We study growth and complexity of \'etale groupoids in relation to
growth of their convolution algebras. As an application, we construct
simple finitely generated algebras of arbitrary Gelfand-Kirillov
dimension $\ge 2$ and simple finitely generated algebras of quadratic growth over
arbitrary fields.
\end{abstract}

\tableofcontents

\section{Introduction}

Topological groupoids are extensively used in dynamics, topology,
non-commutative geometry, and
$C^*$-algebras,
see~\cite{haefl:foliations,paterson:gr,renault:groupoids}. 
With recent results on topological full groups (see~\cite{matui:etale,juschenkomonod,YNS})
new applications of groupoids to group theory were
discovered. 

Our paper studies growth and complexity for \'etale groupoids
with applications to the theory of growth and Gelfand-Kirillov
dimension of algebras. We give examples of groupoids
whose convolution algebras (over an arbitrary field) have prescribed growth.
In particular, we give first examples of simple algebras of quadratic
growth over finite fields and simple algebras of Gelfand-Kirillov
dimension 2 that do not have quadratic growth. 

A \emph{groupoid} $\G$ is the set of isomorphisms of a small category,
i.e., a set $\G$ with partially defined multiplication
and everywhere defined operation of taking inverse satisfying the
following axioms:
\begin{enumerate}
\item If the products $ab$ and $bc$ are defined, then $(ab)c$ and $a(bc)$ are
  defined and are equal.
\item The products $a^{-1}a$ and $bb^{-1}$ are always defined and
  satisfy $abb^{-1}=a$ and $a^{-1}ab=b$ whenever the product $ab$ is defined.
\end{enumerate}
It follows from the axioms that $(a^{-1})^{-1}=a$ and that a product
$ab$ is defined if and only if $bb^{-1}=a^{-1}a$.
The elements of the form $aa^{-1}$ are called \emph{units} of the
groupoid. We call $\be(g)=g^{-1}g$ and $\en(g)=gg^{-1}$ the \emph{origin}
and the \emph{target} of the element $g\in\G$.

A \emph{topological groupoid} is a groupoid together with topology
such that multiplication and taking inverse are continuous.
It is called \emph{\'etale} if every element has a basis of neighborhoods
consisting of \emph{bisections}, i.e., sets $F$ such that
$\be:F\arr\be(F)$ and $\en:F\arr\en(F)$ are homeomorphisms. 

For example, if $G$ is a discrete group acting (from the left) 
by homeomorphisms on a topological space
$\X$, then the topological space $G\times\X$ has a natural structure of an \'etale
groupoid with respect to the multiplication
\[(g_1, g_2(x))\cdot (g_2, x)=(g_1g_2, x).\]
In some sense \'etale groupoids are generalization of actions of
discrete groups on topological spaces.

We consider two growth functions for an \'etale
groupoid $\G$ with compact totally disconnected
space of units. The first one is the most straightforward and classical: growth of
fibers of the origin map. If $S$ is an open compact generating set of
$\G$ then, for a given unit $x$, we can consider the growth function $\gamma_S(r,
x)$ equal to the 
number of groupoid elements with origin in $x$ that can be expressed
as a product of at most $n$ elements of $S\cup S^{-1}$. This notion of
growth of a groupoid has appeared in many situations, especially in
amenability theory for topological groupoids,
see~\cite{kaim,delaroche_renault}. 
See also Theorem~\ref{th:nofree} of our paper,
where for a class of groupoids we show how sub-exponential growth
implies absence of free subgroups in the topological full group of the
groupoid.

This notion of growth does not capture full complexity of a groupoid
precisely because it is ``fiberwise''. Therefore, we introduce the
second growth function: complexity of the groupoid. Let $\mathcal{S}$
be a finite covering by open bisections of an open
compact generating set $S$ of $\G$. For a given natural number $r$
and units $x, y\in\G^{(0)}$ we write $x\sim_r y$ if for any two
products $S_1S_2\ldots S_n$ and $R_1R_2\ldots R_m$ of elements of
$\mathcal{S}\cup\mathcal{S}^{-1}$ such that $n, m\le r$ we have $S_1S_2\ldots
S_nx=R_1R_2\ldots R_mx$ if and only if $S_1S_2\ldots S_ny=R_1R_2\ldots
R_my$. In other words, $x\sim_r y$ if and only if balls of radius $r$
with centers in $x$ and $y$
in the natural $\mathcal{S}$-labeled \emph{Cayley graphs} of
$\G$ are isomorphic. Then the \emph{complexity function} $\delta(r, \mathcal{S})$ is
the number of $\sim_r$-equivalence classes of points of $\G^{(0)}$.

This notion of complexity (called in this case \emph{factor complexity}, or
\emph{subword complexity}) is well known and studied for groupoids of
the action of shifts on closed
shift-invariant subsets of $X^{\Z}$, where $X$ is a finite
alphabet. There is an extensive literature on it,
see~\cite{cassaigneets:factorcomplexity,ferenczi:complexity}
An interesting result from the group-theoretic point of view is a
theorem of
N.~Matte~Bon~\cite{mattebon:liouville} stating
that if complexity of a subshift is
strictly sub-quadratic, then the topological full group of the
corresponding groupoid is Liouville. Here the \emph{topological full
group} of an \'etale groupoid $\G$ is the group of all
$\G$-bisections $A$ such that $\be(A)=\en(A)=\G^{(0)}$.

It seems that complexity of groupoids in more general
\'etale groupoids has not been well studied yet. It would be
interesting to understand how complexity function (together with the
growth of fibers) is related with the properties of the topological
full group of an \'etale groupoid. For example, it would be
interesting to know if there exists  a non-amenable (e.g., free) group
acting faithfully on a compact topological space so that the
corresponding groupoid of germs has sub-exponential growth and
sub-exponential complexity functions.

We relate growth and complexity of groupoids with growth of algebras
naturally associated with them. Suppose that $\mathcal{A}$ is a finitely
generated algebra with a unit over a field $\Bbbk$. Let
$V$ be the $\Bbbk$-linear span of
a finite generating set containing the unit. Denote by $V^n$ the linear
span of all products $a_1a_2\ldots a_n$ for $a_i\in V$. Then
$\mathcal{A}=\bigcup_{n=1}^\infty V^n$. \emph{Growth} of $\mathcal{A}$
is the function
\[\gamma(n)=\dim V^n.\]
It is easy to see that if $\gamma_1, \gamma_2$ are growth functions
defined using different finite generating sets, then there exists $C>1$ such
that $\gamma_1(n)\le\gamma_2(Cn)$ and $\gamma_2(n)\le\gamma_1(Cn)$.

\emph{Gelfand-Kirillov} dimension of $\mathcal{A}$ is defined as
$\limsup_{n\to\infty}\frac{\log\dim V^n}{\log n}$, which informally is
the degree of polynomial growth of the algebra. If $\mathcal{A}$ is
not finitely generated, then its Gelfand-Kirillov dimension is defined
as the supremum of the Gelfand-Kirillov dimensions of all its sub-algebras.
See the monograph~\cite{krauselenagan} for a survey of results on growth of algebras
and their Gelfand-Kirillov dimension.

It is known, see~\cite{warfield:gk}
and~\cite[Theorem~2.9]{krauselenagan}, that Gelfand-Kirillov dimension can be any 
number in the set $\{0, 1\}\cup [2, \infty]$. The values in the interval
$(1, 2)$ are prohibited by a theorem of G.M.~Bergman,
see~\cite[Theorem~2.5]{krauselenagan}.
There are examples of prime algebras of
arbitrary Gelfand-Kirillov dimension $d\in [2, \infty]$, see~\cite{vishne:gk}, but
it seems that no examples of simple algebras of arbitrary Gelfand-Kirillov dimension
over finite fields were known so far.

A naturally defined \emph{convolution algebra} $\Bbbk[\G]$ over
arbitrary field $\Bbbk$ is 
associated with every \'etale groupoid $\G$ with totally
disconnected space of units. If the groupoid $\G$ is Hausdorff, then
$\Bbbk[\G]$ is the convolution algebra of all continuous functions
$f:\G\arr\Bbbk$ with compact support, where $\Bbbk$ is taken with the 
discrete topology. Here convolution $f_1\cdot f_2$ of two functions 
is the function given by the formula
\[f(g)=\sum_{g_1g_2=g}f_1(g_1)f_2(g_2).\]

In the non-Hausdorff case we follow A.~Connes~\cite{conn:foliations}
and B.~Steinberg~\cite{steinberg:groupoidapproach}, and define $\Bbbk[\G]$ as the linear span of
the functions that are continuous on open compact subsets of
$\G$. Equivalently, $\Bbbk[\G]$ is the linear span of the
characteristic functions of open compact $\G$-bisections. 

Note that
the set $\mathcal{B}(\G)$ of all open compact $\G$-bisections (together with the empty
one) is a semigroup. The algebra $\Bbbk[\G]$ is isomorphic to
the quotient of the semigroup algebra of $\mathcal{B}(\G)$
by the ideal generated by the relations $F-(F_1+F_2)$ for
all triples $F, F_1, F_2\in\mathcal{B}(\G)$ such that
$F=F_1\cup F_2$ and $F_1\cap F_2=\emptyset$.

We prove the following relation between growth of groupoids and growth
of their convolution algebras.

\begin{theorem}
\label{th:main}
Let $\G$ be an \'etale groupoid with compact totally disconnected
space of units. Let $\mathcal{S}$ be a finite set of open compact
$\G$-bisections such that $S=\bigcup\mathcal{S}$ is a generating set of
$\G$. Let $V\subset\Bbbk[\G]$ be the linear span of the characteristic
functions of elements of $\mathcal{S}$. Then
\[\dim V^n\le\overline\gamma(r, \mathcal{S})\delta(r, \mathcal{S}),\]
where $\overline\gamma(r,
\mathcal{S})=\max_{x\in\G^{(0)}}\gamma_S(r, x)$.
\end{theorem}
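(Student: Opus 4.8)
The plan is to regard every element of $V^{n}$ as an honest $\Bbbk$-valued function on $\G$ and to show that such a function is determined by its restrictions to the finitely many ``balls'' $B_{n}(x_{1}),\dots,B_{n}(x_{d})$ attached to a full system of representatives $x_{1},\dots,x_{d}$ of the $\sim_{r}$-classes (taking $r=n$ and writing $d=\delta(n,\mathcal{S})$), each of these balls having at most $\overline\gamma(n,\mathcal{S})$ points.

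First I would record the multiplication rule $\chi_{F_{1}}\cdot\chi_{F_{2}}=\chi_{F_{1}F_{2}}$ for open compact bisections $F_{1},F_{2}$: since a bisection meets each fibre of $\be$ in at most one point, the convolution sum $\sum_{g_{1}g_{2}=g}\chi_{F_{1}}(g_{1})\chi_{F_{2}}(g_{2})$ has at most one nonzero term. Consequently every element of $V^{n}$ is a $\Bbbk$-linear combination of characteristic functions $\chi_{W}$, where $W$ runs over the (necessarily finite) set $\mathcal{T}_{n}$ of bisections expressible as a product of at most $n$ elements of $\mathcal{S}\cup\mathcal{S}^{-1}$. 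Let $B_{n}(x)$ be the set of $g\in\be^{-1}(x)$ expressible as a product of at most $n$ elements of $S\cup S^{-1}$, so that $|B_{n}(x)|=\gamma_{S}(n,x)\le\overline\gamma(n,\mathcal{S})$. If $W\in\mathcal{T}_{n}$ and $g\in W$ has $\be(g)=x$, then $g\in B_{n}(x)$; hence the restriction of any such $\chi_{W}$, and therefore of any $f\in V^{n}$, to the fibre $\be^{-1}(x)$ is supported on the finite set $B_{n}(x)$.

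Next I would introduce the linear evaluation map
\[
\Phi\colon V^{n}\arr\bigoplus_{i=1}^{d}\Bbbk^{B_{n}(x_{i})},\qquad\Phi(f)=\bigl(f|_{B_{n}(x_{i})}\bigr)_{i=1}^{d},
\]
whose codomain has dimension $\sum_{i=1}^{d}|B_{n}(x_{i})|\le\delta(n,\mathcal{S})\,\overline\gamma(n,\mathcal{S})$ (if there are infinitely many $\sim_{n}$-classes the asserted bound is vacuous, since $\dim V^{n}\le|\mathcal{T}_{n}|<\infty$ in any case), so it remains to prove that $\Phi$ is injective. The essential input is the meaning of $x\sim_{n}y$: it yields a bijection $\phi_{x,y}\colon B_{n}(x)\to B_{n}(y)$, induced by the root- and label-preserving isomorphism of the radius-$n$ balls of the $\mathcal{S}$-labeled Cayley graphs, and determined by $\phi_{x,y}\bigl((S_{1}\cdots S_{k})x\bigr)=(S_{1}\cdots S_{k})y$ for every word $S_{1}\cdots S_{k}$ of length $\le n$. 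I would then verify the transport property: for $W=R_{1}\cdots R_{\ell}\in\mathcal{T}_{n}$ and $g=(S_{1}\cdots S_{k})x\in B_{n}(x)$, using that $W$ is a bisection with $\be(g)=x$ one has $g\in W$ if and only if $(R_{1}\cdots R_{\ell})x=(S_{1}\cdots S_{k})x$ as singleton-or-empty subsets of $\G$; applying the defining property of $\sim_{n}$ to these two words of length $\le n$ rewrites this as $(R_{1}\cdots R_{\ell})y=(S_{1}\cdots S_{k})y$, i.e.\ as $\phi_{x,y}(g)\in W$. Therefore $\chi_{W}(g)=\chi_{W}(\phi_{x,y}(g))$ for all $W\in\mathcal{T}_{n}$, and by linearity $f(g)=f(\phi_{x,y}(g))$ for all $f\in V^{n}$.

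Injectivity of $\Phi$ then follows at once: if $f\in V^{n}$ has $\Phi(f)=0$ but $f(h)\ne0$ for some $h\in\G$, then writing $f=\sum_{W\in\mathcal{T}_{n}}c_{W}\chi_{W}$ and $f(h)=\sum_{W\ni h}c_{W}$ forces $h\in B_{n}(x)$ for $x=\be(h)$; taking the representative $x_{i}$ of the $\sim_{n}$-class of $x$ and $h'=\phi_{x,x_{i}}(h)\in B_{n}(x_{i})$, the transport property gives $f(h')=f(h)\ne0$, contradicting $\Phi(f)=0$. Hence $f$ is the zero function, so $f=0$ in $\Bbbk[\G]$ (recall that elements of $\Bbbk[\G]$ are genuine functions on $\G$, also when $\G$ is non-Hausdorff), whence $\dim V^{n}\le\sum_{i=1}^{d}|B_{n}(x_{i})|\le\delta(n,\mathcal{S})\,\overline\gamma(n,\mathcal{S})$. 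I expect the only real obstacle to be the careful unpacking of the definition of $\sim_{n}$ in the transport step --- in particular the point that it records equality of the elements $(S_{1}\cdots S_{k})x$ themselves and not merely of their targets in $\G^{(0)}$ --- together with the routine check that $\phi_{x,y}$ is well defined and maps $B_{n}(x)$ bijectively onto $B_{n}(y)$; the rest of the argument is formal.
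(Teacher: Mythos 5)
Your argument is correct, but it establishes the bound by a mechanism genuinely different from the paper's. You embed $V^n$ into $\bigoplus_{i=1}^{d}\Bbbk^{B_n(x_i)}$ by restricting functions to the balls at one representative per $\sim_n$-class and prove injectivity via the transport property $f(g)=f(\phi_{x,x_i}(g))$; the essential inputs, which you correctly isolate, are that a rooted, label-preserving isomorphism of radius-$n$ balls preserves both the domains of products of at most $n$ generating bisections and the coincidence pattern of such products applied to the basepoint, and that elements of $\Bbbk[\G]$ are honest functions on $\G$ (also in the non-Hausdorff case), so vanishing as a function means vanishing in the algebra. The paper instead produces an explicit spanning set of cardinality at most $\overline\gamma(n,\mathcal{S})\delta(n,\mathcal{S})$: for each class it forms the open set $B_x$ of points at which all products in $\mathcal{S}^n$ have the same domains and coincidences as at $x$, refines the covering $\{B_x\}$ to a partition of $\G^{(0)}$ by disjoint clopen sets $\widetilde B_{x_i}$ using the Shrinking Lemma, and writes each $1_F$, $F\in\mathcal{S}^n$, as $\sum_i 1_{F\cdot\widetilde B_{x_i}}$, observing that $F\cdot\widetilde B_{x_i}$ depends only on the element $F\cdot x_i$ of the ball at $x_i$. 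Thus both proofs rest on the same combinatorial core (the ball at $x$ determines the behaviour of all length-$n$ products at $x$), but implement it dually: an injective evaluation map versus a small spanning set. Your route avoids the topological step (clopen refinement/Shrinking Lemma) entirely and is in that sense more elementary; the paper's route is more constructive, exhibiting the spanning functions $1_{F\cdot\widetilde B_{x_i}}$ explicitly, and the clopen sets $A_{x,n}$ it introduces are reused in the finite-generation criterion of Proposition~\ref{prop:expansive}. Two cosmetic remarks: you work with words of length at most $n$ over $\mathcal{S}\cup\mathcal{S}^{-1}$ although $V^n$ only requires words of length exactly $n$ over $\mathcal{S}$, which is harmless, and the caveat about infinitely many $\sim_n$-classes is unnecessary since $\delta(r,\mathcal{S})$ is always finite.
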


We say that a groupoid $\G$ is \emph{minimal} if every $\G$-orbit is
dense in $\G^{(0)}$. We say that $\G$ is \emph{essentially principal}
if the set of points $x$ with trivial isotropy group
is dense in $\G^{(0)}$. Here the isotropy group of a point $x$ is the
set $\{g\in\G\;:\;\be(g)=\en(g)=x\}$. It is known,
see~\cite{brownclarketc:simplicity}, that for a Hausdorff minimal essentially principal groupoid
$\G$ with compact totally disconnected set of units the algebra
$\Bbbk[\G]$ is simple. We give a proof of this fact for completeness in
Proposition~\ref{pr:simple}. 

We give in Proposition~\ref{prop:expansive} a condition
(related to the classical notion of an \emph{expansive dynamical system})
ensuring that $\Bbbk[\G]$ is finitely generated.

Fibers of the origin map provide us with naturally defined 
$\Bbbk[\G]$-modules. Namely, for a given unit $x\in\G^{(0)}$ consider
the vector space $\Bbbk\G_x$ of functions $\phi:\G_x\arr\Bbbk$ with
finite support, where $\G_x=\be^{-1}(x)$ is the set of elements of the
groupoid $\G$ with origin in $x$. Then convolution $f\cdot \phi$ for
any $f\in\Bbbk[\G]$ and $\phi\in\Bbbk\G_x$ is an element of
$\Bbbk\G_x$, and hence $\Bbbk\G_x$ is a left $\Bbbk[\G]$-module.

It is easy to prove that if the isotropy group of $x$ is trivial, then
$\Bbbk\G_x$ is simple and that growth of $\Bbbk\G_x$ is bounded by
$\gamma_S(x, r)$, see Proposition~\ref{pr:modules}.

As an example of applications of these results, we consider the
following family of algebras. Let $X$ be a finite alphabet, and let
$w:X\arr\Z$ be a bi-infinite sequence of elements of $X$. 
Denote by $D_x$, for $x\in X$ the diagonal matrix  $(a_{i, j})_{i,
  j\in\Z}$ given by
\[a_{i, j}=\left\{\begin{array}{ll} 1 & \text{if $i=j$ and $w(i)=x$,}\\ 0 &
    \text{otherwise.}\end{array}\right.\]
Let $T$ be the matrix $(t_{i, j})_{i, j\in\Z}$ of the shift given by
\[t_{i, j}=\left\{\begin{array}{ll} 1 & \text{if $i=j+1$,}\\ 0 &
    \text{otherwise.}\end{array}\right.\]
Fix a field $\Bbbk$, and let $\mathcal{A}_w$ be the $\Bbbk$-algebra
generated by the matrices $D_x$, for $x\in X$, by $T$, and its transpose $T^\top$.

We say that $w$ is \emph{minimal} if for every finite subword
$(w(n), w(n+1), \ldots, w(n+k))$ there exists $R>0$ such that for any
$i\in\Z$ there exists $j\in\Z$ such that $|i-j|\le R$ and
$(w(j), w(j+1), \ldots, w(j+k))=(w(n), w(n+1), \ldots, w(n+k))$. We
say that $w$ is \emph{non-periodic} if there does not exist $p\ne 0$
such that $w(n+p)=w(n)$ for all $n\in\Z$.
\emph{Complexity function} $p_w(n)$ of the sequence $w\in X^{\Z}$ is
the number of different subwords $(w(i), w(i+1), \ldots, w(i+n-1))$ of
length $n$ in $w$.

The following theorem is a corollary of the results of our paper,
see Subsection~\ref{sss:subshifts} and Example~\ref{ex:matrices}.

\begin{theorem}
Suppose that $w\in X^{\Z}$ is minimal and non-periodic. Then the
algebra $\mathcal{A}_w$ is simple, and its growth $\gamma(n)$
satisfies \[C^{-1}n\cdot p_w(C^{-1}n)\le Cn\cdot p_w(Cn)\]
for some $C>1$.
\end{theorem}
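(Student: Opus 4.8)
The plan is to realize $\mathcal{A}_w$ as the convolution algebra $\Bbbk[\G]$ of the \'etale transformation groupoid $\G=\Z\ltimes\Omega_w$, where $\Omega_w\subseteq X^{\Z}$ is the orbit closure of $w$ under the shift; then simplicity follows from Proposition~\ref{pr:simple}, and the two-sided bound on $\gamma$ follows from Theorem~\ref{th:main} together with an explicit supply of linearly independent elements in small powers of the generating subspace. The space $\Omega_w$ is compact and totally disconnected, the acting group $\Z$ is discrete so $\G$ is Hausdorff, $\G$ is minimal precisely because $w$ is minimal, and since $w$ is non-periodic and $\Omega_w$ is minimal, every point of $\Omega_w$ is non-periodic, so $\G$ is principal. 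I take the finite family of open compact bisections $\mathcal{S}=\{[x]_0\cap\Omega_w:x\in X\}\cup\{T_+,T_-\}$, where $[x]_0=\{u:u(0)=x\}$ and $T_\pm=\{(\pm1,u):u\in\Omega_w\}$; its union generates $\G$, and $\sum_x\chi_{[x]_0}$ is the unit of $\Bbbk[\G]$.

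First I would identify the algebras. Under the representation $\pi_w$ of $\Bbbk[\G]$ on $\Bbbk\G_w$ from Proposition~\ref{pr:modules}, with $\G_w$ identified with $\Z$ via $(n,w)\leftrightarrow n$, the characteristic function of $[x]_0\cap\Omega_w$ acts as $D_x$, that of $T_+$ as $T$, and that of $T_-$ as $T^\top$. Since $\{D_x,T,T^\top\}$ generates $\mathcal{A}_w$ and $\{\chi_F:F\in\mathcal{S}\}$ generates $\Bbbk[\G]$ as a unital algebra — every open compact bisection of $\Z\ltimes\Omega_w$ is a finite sum of characteristic functions of sets $\{(k,u):u\in C\}$ with $C$ a cylinder, and each such function is a product of elements of $\mathcal{S}^{\pm1}$ — this gives $\mathcal{A}_w=\pi_w(\Bbbk[\G])$. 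By Proposition~\ref{pr:simple} the algebra $\Bbbk[\G]$ is simple, hence its nonzero quotient $\mathcal{A}_w$ is isomorphic to $\Bbbk[\G]$ and simple, and $\pi_w$ is injective. It therefore suffices to estimate $\dim V^r$, where $V$ is the $\Bbbk$-span of $\{\chi_F:F\in\mathcal{S}\}$ inside $\Bbbk[\G]$.

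For the upper bound, a product of at most $r$ elements of $\mathcal{S}\cup\mathcal{S}^{-1}$ moves the basepoint of a fibre $\G_u\cong\{\sigma^n u:n\in\Z\}$ by at most $r$ positions, so $\overline\gamma(r,\mathcal{S})\le 2r+1$; the $\mathcal{S}$-labelled Cayley graph of $\G_u$ is the bi-infinite line with vertex $n$ carrying the label $u(n)$, so $u\sim_r v$ iff $u$ and $v$ agree on the coordinates $-r+1,\dots,r-1$, which makes $\delta(r,\mathcal{S})$ equal to the number of length-$(2r-1)$ words occurring in points of $\Omega_w$, i.e. to $p_w(2r-1)$. Theorem~\ref{th:main} then yields $\dim V^r\le(2r+1)\,p_w(2r-1)$. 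For the lower bound I would use that a cylinder idempotent $D_{[a]_0}:=\chi_{[a]_0\cap\Omega_w}$ for a word $a=a_0a_1\cdots a_{\ell-1}$ occurring in $w$ can be written as the telescoping product $D_{a_0}T_-D_{a_1}T_-\cdots T_-D_{a_{\ell-1}}T_+^{\ell-1}$ of $3\ell-2$ elements of $\mathcal{S}^{\pm1}$ — reading off the $\ell$ coordinates one by one and then returning to the origin — so that $T_+^kD_{[a]_0}$ lies in $V^{4\ell}$ for all $0\le k\le\ell$. As $(k,a)$ ranges over $0\le k\le\ell$ and the $p_w(\ell)$ length-$\ell$ words occurring in $w$, these elements of $\Bbbk[\G]$ are supported on pairwise disjoint bisections (distinct $k$ lie on distinct $\Z$-sheets, distinct words of the same length give disjoint cylinders), so evaluating a vanishing linear combination at a point of the relevant bisection forces every coefficient to vanish; hence $\dim V^{4\ell}\ge(\ell+1)\,p_w(\ell)$. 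Combining the two bounds, and absorbing into the constant the rescaling of the argument caused by a change of finite generating set, gives $C^{-1}n\,p_w(C^{-1}n)\le\gamma(n)\le Cn\,p_w(Cn)$ for a suitable $C>1$.

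The upper bound is routine once Theorem~\ref{th:main} is available. The delicate points, which I expect to be the main obstacle, are the identification step — one must know that $\mathcal{A}_w$ is \emph{all} of $\Bbbk[\G]$, not a proper quotient, and this is precisely where simplicity is invoked, so care is needed that the argument is not circular — and, in the lower bound, the production of the \emph{linear}-length expression for the cylinder idempotents: the naive conjugation formula $D_{[a]_0}=\prod_{j<\ell}T_+^{-j}D_{a_j}T_+^{j}$ has length quadratic in $\ell$ and would only give a lower bound of order $\sqrt n\cdot p_w(\sqrt n)$, together with the bookkeeping needed to see the linear independence of the $(\ell+1)p_w(\ell)$ resulting elements.
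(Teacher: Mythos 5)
Your proposal is correct and follows essentially the same route as the paper: realize $\mathcal{A}_w$ as a faithful image of $\Bbbk[\G]$ for the shift groupoid via the module $\Bbbk\G_w$ (Example~\ref{ex:matrices}), get simplicity from Proposition~\ref{pr:simple}, the upper bound from Theorem~\ref{th:growth} with $\overline\gamma(r,\mathcal{S})\le 2r+1$ and $\delta(r,\mathcal{S})$ equal to a complexity value, and the lower bound from disjointly supported characteristic functions indexed by admissible words (Subsection~\ref{sss:subshifts}, Proposition~\ref{pr:shiftgrowth}). The only real deviation is your generating family $\{D_x,T_\pm\}$, which necessitates the (correct) telescoping expression of length $3\ell-2$ for cylinder idempotents, whereas the paper works with the bisections $S_x$ (the shift restricted to the cylinders $\{w(0)=x\}$), for which the products $S_{x_1}\cdots S_{x_n}$ are already disjointly supported and the lower bound is immediate.
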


We can apply now results on complexity of sequences to
construct simple algebras of various growths. For example, if $w$ is
\emph{Sturmian}, then $p_w(n)=n+1$, and hence $\mathcal{A}_w$ has quadratic growth. For
different \emph{Toeplitz} sequences we can obtain simple algebras of
arbitrary Gelfand-Kirillov dimension $d\ge 2$, or simple algebras of
growth $n\log n$, etc., see Subsection~\ref{sss:subshifts}.

Another class of examples of groupoids considered in our paper are
groupoids associated with groups acting on a rooted tree. If $G$ acts
by automorphisms on a locally finite rooted tree $T$, then it acts
by homeomorphisms on the boundary $\partial T$. One can consider the
\emph{groupoid of germs} $\G$ of the action. Convolution algebras
$\Bbbk[\G]$ are related to the \emph{thinned algebras} studied in~\cite{sid:ring,bartholdi:ring}.
In the case when $G$ is a \emph{contracting self-similar group},
Theorem~\ref{th:main} implies a result of L.~Bartholdi from~\cite{bartholdi:ring} giving
an estimate of Gelfand-Kirillov dimension for the thinned algebras of
contracting self-similar groups.

\section{\'Etale groupoids}

A \emph{groupoid} is a small category of isomorphisms (more precisely,
the set of its morphisms). For a groupoid
$\G$, we denote by $\G^{(2)}$ the set of composable pairs, i.e., the
set of pairs $(g_1, g_2)\in\G\times\G$ such that the product $g_1g_2$
is defined. We denote by $\G^{(0)}$ the set of units of $\G$, i.e.,
the set of identical isomorphisms. We also denote by $\be,
\en:\G\arr\G^{(0)}$ the \emph{origin} and \emph{target} maps given by
\[\be(g)=g^{-1}g,\qquad \en(g)=gg^{-1}.\]
We interpret then an element $g\in\G$ as an arrow from $\be(g)$ to
$\en(g)$. The product $g_1g_2$ is defined if and only if
$\en(g_2)=\be(g_1)$.

For $x\in\G^{(0)}$, denote
\[\G_x=\{g\in\G\;:\;\be(g)=x\},\qquad\G^x=\{g\in\G\;:\;\en(g)=x\}.\]

The set $\G_x\cap\G^x$ is called the \emph{isotropy group} of $x$. A
groupoid is said to be \emph{principal} (or an equivalence relation)
if the isotropy group of every point is trivial. 
Two
units $x, y\in\G^{(0)}$ belong to one \emph{orbit} if there exists
$g\in\G$ such that $\be(g)=x$ and $\en(g)=y$. It is easy to see that
belonging to one orbit is an equivalence relation.

A \emph{topological groupoid} is a groupoid $\G$ with a topology on it
such that multiplication $\G^{(2)}\arr\G$ and taking inverse
$\G\arr\G$ are continuous maps. We do not require that $\G$ is
Hausdorff, though we assume that the space of units $\G^{(0)}$ is
metrizable and locally compact.

A \emph{$\G$-bisection} is a subset $F\subset\G$ such that the maps
$\be:F\arr\be(F)$ and $\en:F\arr\en(F)$ are homeomorphisms. 

\begin{defi}
A topological groupoid $\G$ is \emph{\'etale} if the set of all open
$\G$-bisections is a basis of the topology of $\G$. 
\end{defi}

Let $\G$ be an \'etale groupoid. It is easy to see  that
product of two open bisections is an open bisection. It follows that
for every bisection $F$ the sets $\be(F)=F^{-1}F$ and $\en(F)=FF^{-1}$
are open, which in turn implies that $\G^{(0)}$ is an open subset of $\G$.

If $\G$ is not Hausdorff, then there exist $g_1, g_2\in\G$ that do not
have disjoint bisections. Since $\G^{(0)}$ is Hausdorff,
this implies that $\be(g_1)=\be(g_2)$ and $\en(g_1)=\en(g_2)$. It
follows that the unit $x=\be(g_1)$ and the element $g_2^{-1}g_1$ of
the isotropy group of $x$ do not have disjoint open neighborhoods. In
particular, it means that principal \'etale groupoids are always
Hausdorff, and that an \'etale groupoid is Hausdorff if and only if
$\G^{(0)}$ is a closed subset of $\G$.

\begin{examp}
Let $G$ be a discrete group acting by homeomorphisms on a space $\X$. Then the
space $G\times\X$ has a natural groupoid structure with given by the
multiplication
\[(g_2, g_1(x))(g_1, x)=(g_2g_1, x).\]
This is an \'etale groupoid, since every set $\{g\}\times\X$ is an
open bisection. The groupoid $G\times\X$ is called the \emph{groupoid
  of the action}, and is denoted $G\ltimes\X$.
\end{examp}

Our main class of groupoids will be naturally defined 
quotients of the groupoids of actions,
called groupoids of germs.

\begin{examp}
Let $G$ and $\X$ be as in the previous example. A
\emph{germ} is an equivalence class of a pair $(g, x)\in G\times\X$
where $(g_1, x)$ and $(g_2, x)$ are equivalent if there exists a
neighborhood $U$ of $x$ such that the maps $g_1:U\arr\X$ and
$g_2:U\arr\X$ coincide. The set of germs is also an \'etale groupoid
with the same multiplication rule as in the previous example. We call
it \emph{groupoid of germs of the action}.
\end{examp}

The spaces of units in both groupoids are naturally identified with
the space $\X$ (namely, we identify the pair or the germ $(1, x)$ with
$x$). The groupoid of the action is Hausdorff if $\X$ is Hausdorff, since it is
homeomorphic to $G\times\X$. The groupoid of germs, on the other hand,
is frequently non-Hausdorff, even for a Hausdorff space $\X$.

If every germ of every non-trivial element of $G$ is not a unit (i.e.,
not equal to a germ of the identical homeomorphism), then
the groupoid of the action coincides with the groupoid of germs.

Many interesting examples of \'etale groupoids appear in dynamics and
topology, see~\cite{haefl:foliations,bellissardjuliensavinien,nek:hyperbolic}.

\section{Compactly generated groupoids}

For the rest of the paper, $\G$ is an \'etale groupoid such that
$\G^{(0)}$ is a compact totally disconnected metrizable space.
Note that then there exists a basis of topology of $\G$
consisting of open compact $\G$-bisections. Note that we allow compact
non-closed and compact non-Hausdorff sets, since $\G$ in general is
not Hausdorff. However, if
$F$ is an open compact bisection, then $\be(F)$ and $\en(F)$ are clopen
(i.e., closed and open) and $F$ is Hausdorff.

\subsection{Cayley graphs and their growth}

\begin{defi}
A groupoid $\G$ with compact totally disconnected unit space
is \emph{compactly generated} if there exists a
open compact subset $S\subset\G$ such that $\G=\bigcup_{n\ge 0}(S\cup S^{-1})^n$. 
The set $S$ is called the \emph{generating set} of $\G$.
\end{defi}

This definition is equivalent (for \'etale groupoids with compact
totally disconnected unit space) to the definition of~\cite{haefliger:compactgen}.

\begin{examp}
Let $G$ be a group acting on a Cantor set
$\X$. If $S$ is a finite generating set of $G$, then $S\times\X$ is an
open compact generating set of the groupoid $G\ltimes\X$. The set all
of germs of elements of $S$ is an open compact generating set of the
groupoid of germs of the action. Thus, both groupoids are compactly
generated if $G$ is finitely generated.
\end{examp}

Let $S$ be an open compact generating set of $\G$. Let
$x\in\G^{(0)}$. The \emph{Cayley graph} $\G(x, S)$ is the directed
graph with the set of vertices $\G_x$ in which we have an arrow from
$g_1$ to $g_2$ whenever there exists $s\in S$ such that $g_2=sg_1$.

We will often consider the graph $\G(x, S)$ as a \emph{rooted graph} with root $x$. Morphism
$\phi:\Gamma_1\arr\Gamma_2$ of
rooted graphs is a morphism of graphs that maps the root of $\Gamma_1$
to the root of $\Gamma_2$.

Note that since $S$ can be covered
by a finite set of bisections, the degrees of vertices of the graphs
$\G(x, S)$ are uniformly bounded.

\begin{examp}
Let $G$ be a finitely generated group acting on a totally disconnected
compact space $\X$. Let $S$ be a finite generating set of $G$, and let
$S\times\X$ be the corresponding generating set of the groupoid of
action $G\ltimes\X$. The
Cayley graphs $G\ltimes\X(x, S\times\X)$ coincide then with the Cayley
graphs of $G$ (with respect to the generating set $S$).

The groupoid of germs $\G$ will have smaller Cayley graphs. Let
$S'\subset\G$ be the set of all germs of elements of $S$. Denote,
for $x\in\X$, by $G_{(x)}$ the subgroup of $G$ consisting of all
elements $g\in G$ such that there exists a neighborhood $U$ of $x$
such that $g$ fixes every point of $U$. Then $\G(x, S')$ is isomorphic
to the \emph{Schreier graph} of $G$ modulo $G_{(x)}$. Its vertices are
the cosets $hG_{(x)}$, and a coset $h_1G_{(x)}$ is connected by an
arrow with $h_2G_{(x)}$ if there exists a generator $s\in S$ such that
$sh_1G_{(x)}=h_2G_{(x)}$.
\end{examp}

Cayley graphs $\G(x, S)$ are closely related to the \emph{orbital
graphs}, which are defined as graphs $\Gamma(x, S)$ with the set of
vertices equal to the orbit of $x$, in which a vertex $x_1$ is
connected by an arrow to a vertex $x_2$ if there exists $g\in S$ such
that $\be(s)=x_1$ and $\en(s)=x_2$. Orbital graph $\Gamma(x, S)$ is the
quotient on the Cayley graph $\G(x, S)$ by the natural right action of
the isotropy group of $x$. In particular, orbital graph and the Cayley
graph coincide if the isotropy group of $x$ is trivial.

Denote by $B_S(x, n)$ the ball of radius $n$ with center $x$ in the
graph $\G(x, S)$ seen as a rooted graph (with root $x$). Let
\[\gamma_S(x, n)=|B_S(x, n)|,\qquad
\overline\gamma(n, S)=\max_{x\in\G^{(0)}}\gamma_S(x, n).\]

If $S_1$ and $S_2$ are two open compact generating sets of $\G$, then
there exists $m$ such that $S_2\subset\bigcup_{1\le k\le m}(S_1\cup
S_1^{-1})^k$ and $S_1\subset\bigcup_{1\le k\le m}(S_2\cup
S_2^{-1})^k$. Then $\gamma_{S_1}(x, mn)\ge\gamma_{S_2}(x,
n)$ and $\gamma_{S_2}(x, mn)\ge\gamma_{S_1}(x, n)$ for all $n$. It also follows
that $\overline\gamma(mn, S_1)\ge\overline\gamma(n, S_2)$ and
$\overline\gamma(mn, S_2)\ge\overline\gamma(n, S_1)$ for all $n$. In
other words, the \emph{growth rate} of the functions $\gamma_S(x, n)$
and $\overline\gamma(n, S)$ do not depend on the choice of $S$, if $S$
is a generating set.

Condition of polynomial growth of Cayley graphs of groupoids (or, in
the measure-theoretic category, of connected components of graphings of
equivalence relations) appear in the study of amenability of
groupoids, see~\cite{kaim,delaroche_renault}.

Here is another example of applications of the notion of growth of groupoids.

\begin{theorem}
\label{th:nofree}
Let $G$ be a finitely generated subgroup of the automorphism group of a locally finite
rooted tree $T$. Consider the groupoid of germs $\G$ of the action of
$G$ on the boundary $\partial T$ of the tree. If $\gamma_S(x, n)$
has sub-exponential growth for every $x\in\partial T$, then $G$ has no
free subgroups.
\end{theorem}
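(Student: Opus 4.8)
The plan is to use the Tits-alternative-type mechanism via \emph{ping-pong}, but to run it on the \emph{orbital graphs} (equivalently, Schreier graphs of the point stabilizers $G_{(x)}$) rather than on the Cayley graph of $G$ itself. Suppose for contradiction that $G$ contains a free subgroup $H=\langle a,b\rangle$ of rank $2$. The key geometric fact about groups acting on a rooted tree is that the action on the boundary $\partial T$ is ``locally like'' the action on cylinder sets: for any $g\in G$ and any $x\in\partial T$ with a non-trivial germ, $g$ moves a whole cylinder neighborhood of $x$. I would first show that, because $\partial T$ is compact and $H$ acts faithfully, one can find a point $x\in\partial T$ and a free sub-semigroup structure witnessed \emph{inside the orbit of $x$}: concretely, I want elements whose action on the orbital graph $\Gamma(x,S)$ exhibits exponentially many distinct images of the root under freely reduced words. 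The point is that if $H$ were free and acted faithfully, then on a suitable orbit the Schreier graph would ``see'' the $4$-regular tree (the Cayley graph of $H$), or at least a subtree of exponential growth, contradicting sub-exponential growth of $\gamma_S(x,n)$ — which by the remarks preceding the theorem controls $|B_S(x,n)|$ up to reparametrization and dominates the orbital-graph ball sizes, since the orbital graph is a quotient of the Cayley graph.

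In more detail, the key steps in order are: (1) Reduce to showing that some orbital graph $\Gamma(x,S)$ contains, for infinitely many $n$, balls $B(x,n)$ of size exponential in $n$; by the quasi-isometry-invariance of growth rate discussed before the statement, and since $\gamma_S(x,n)\ge|B_S(x,n)|\ge|B_{\Gamma}(x,n)|$, this contradicts the hypothesis. (2) Take a putative free subgroup $H=\langle a,b\rangle\le G$. Using faithfulness of the action of $G$ on $\partial T$ (automorphisms of $T$ act faithfully on $\partial T$), pass to a point $x\in\partial T$ at which the action of $H$ is ``sufficiently non-degenerate''. Here I would use a Baire-category or compactness argument: the set of $x$ where every non-trivial $h$ in a fixed finite subset of $H$ has non-trivial germ is comeager (its complement, the set of $x$ fixed on a neighborhood by some nontrivial $h$, is a countable union of clopen sets, none equal to all of $\partial T$ by faithfulness), so such $x$ exists. (3) For such $x$, show that the orbit map $H\to Hx$, $h\mapsto h(x)$, is injective — or more precisely, that the stabilizer $H\cap G_{(x)}$ meets each sufficiently large ball of $H$ in a proportion bounded away from the whole ball, so that the image of the $H$-ball of radius $n$ in the orbital graph still has size $\gg c^n$. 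This gives balls of exponential size in $\Gamma(x,S)$, completing the contradiction.

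The main obstacle — step (3) — is that a free subgroup $H$ of $\mathrm{Aut}(T)$ can still have highly non-trivial (even infinite-index-inside-$H$, or infinitely generated) stabilizers $H_{(x)}=H\cap G_{(x)}$ of boundary points, so injectivity of the orbit map on all of $H$ may genuinely fail; one must instead argue that $H_{(x)}$ cannot be ``too large'' in the metric sense. The way I would handle this: fix a large radius $r$ and consider the finitely many elements $h_1,\dots,h_k$ of the $H$-ball of radius $r$ with $h_i\ne 1$; by step (2) choose $x$ so that each $h_i$ has non-trivial germ at $x$, i.e.\ moves $x$ (since a non-trivial germ of a tree automorphism at $x$ cannot fix $x$, as the automorphism acts non-trivially on every cylinder it moves). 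Then all $h_i(x)$ are distinct, so $B_\Gamma(x,r)$ has at least $k$ vertices, and $k$ grows exponentially in $r$ (it is $\sim 3^r$ for the free group). Letting $r\to\infty$, and using that we may re-choose $x=x_r$ for each $r$ while taking the max over $\G^{(0)}$ in $\overline\gamma(r,S)$, we get $\overline\gamma(r,S)\ge k(r)\succeq 3^r$, hence some $x$ with $\gamma_S(x,n)$ of exponential growth — but that is not quite what is assumed (the hypothesis is on \emph{every} $x$). To fix this, I would instead note that minimality is not assumed, so I close the gap by using compactness of $\partial T$ to pass to a limit: the rooted orbital graphs $(\Gamma(x_r,S),x_r)$ subconverge, in the local (Gromov–Hausdorff / pointed) topology on rooted graphs of bounded degree, to a rooted graph $(\Gamma_\infty, o)$ which is a rooted orbital graph of a boundary point $x_\infty$ of a sub-tree limit, of exponential growth, contradicting the per-point hypothesis at $x_\infty$. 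This limiting argument — identifying the limit of orbital graphs with an orbital graph of an honest boundary point of the tree action — is where the real work lies, and it is also where the ``groups acting on rooted trees'' hypothesis (as opposed to arbitrary étale groupoids) is essential.
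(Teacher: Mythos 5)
Your proposal has genuine gaps at exactly the points where the real difficulty lies; the paper's own proof is a short reduction to the dichotomy of \cite[Theorem~3.3]{nek:free}, and your argument in effect tries to reprove that dichotomy by local considerations that are false. First, the Baire-category claim in your step (2) fails: for a fixed non-trivial automorphism $h$ of $T$, the set of $x\in\partial T$ at which $h$ has non-trivial germ is closed but need not be dense at all --- $h$ may act trivially on a large clopen subset of $\partial T$, so the set of points with trivial germ is a non-empty \emph{open} set, not a nowhere dense one, and the intersection over finitely many non-trivial elements of $H$ of the ``non-trivial germ'' sets can perfectly well be empty; faithfulness only gives that each such set is non-empty. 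Second, the assertion in step (3) that ``a non-trivial germ of a tree automorphism at $x$ cannot fix $x$'' is false: in the Grigorchuk group (Example~\ref{ex:grigorchuk}) the generators $b,c,d$ fix the point $111\ldots$ while having non-trivial germs there, which is precisely the source of non-trivial isotropy in groupoids of germs (cf.\ Example~\ref{ex:nonHausdorff}). Because of this, your step (1) reduction to orbital (Schreier) graphs misses the essential case: a free subgroup $F$ may fix a point $x$, so its freeness is invisible in every orbit and shows up only in the pairwise distinct germs $(f,x)$, $f\in F$, i.e.\ in the isotropy part of the Cayley graph $\G(x,S)$. (Even granting step (2), the points $h_i(x)$ need not be distinct; what one can hope for is distinctness of the germs $(h_i,x)$, and only if the non-trivial-germ condition is imposed on the ball of radius $2r$.) The cited dichotomy --- either some free subgroup has a point with trivial stabilizer, or some free subgroup fixes a point at which all its non-trivial elements have non-trivial germs --- is exactly the non-trivial input that handles both scenarios, and in each branch it hands you a \emph{specific} point $x$ at which $\G(x,S)$ has exponential growth.

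The concluding limiting argument also does not repair the hole you point out yourself. Producing for each $r$ a point $x_r$ with an exponentially large $r$-ball only bounds $\overline\gamma(r,S)$ from below, while the hypothesis is pointwise subexponential growth, which is not uniform in $x$; no contradiction follows unless a single boundary point with exponential growth is exhibited. A pointed limit of the graphs $(\Gamma(x_r,S),x_r)$ does not supply one: for fixed $r$ the function $x\mapsto\gamma_S(x,r)$ is upper semicontinuous (equality of germs at the limit point propagates to a neighborhood), but your construction gives a lower bound only for the ball of radius $r$ at the point $x_r$, not for balls of all radii at one common point, and identifying the limit graph with the Cayley graph $\G(x_\infty,S)$ of an actual boundary point is precisely the step you concede is unproven --- note also that what must be bounded below is the germ Cayley graph, not the orbital graph. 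So as written the proposal rests on two false local statements and an incomplete compactness step; the content of the theorem really is the dichotomy of \cite{nek:free}, plus the easy observation that each of its branches forces exponential growth of $\G(x,S)$ at a specific $x$.
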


\begin{proof}
By~\cite[Theorem~3.3]{nek:free}, if $G$ has a free subgroup, then either there exists a free
subgroup $F$ and a point $x\in\partial T$ such that the stabilizer of
$x$ in $F$ is trivial, or there exists a free subgroup $F$ and a point
$x\in\partial T$ such that $x$ is fixed by $F$ and every non-trivial
element $g$ of $F$ the germ $(g, x)$ is non-trivial. But both
conditions imply that the Cayley graph $\G(x, S)$ has exponential growth.
\end{proof}

\subsection{Complexity}
Let $\mathcal{S}$ be a finite set of open compact
$\G$-bisections such that $S=\bigcup\mathcal{S}$ is a generating set. Note that
every compact subset of $\G$ can be covered by a finite number of open
compact $\G$-bisections.
 
Denote by $\G(x, \mathcal{S})$ the oriented labeled
graph with the set of vertices $\G_x$ in which we have an arrow
from $g_1$ to $g_2$ labeled by $A\in\mathcal{S}$ if there exists $s\in
A$ such that $g_2=sg_1$. 

The graph $\G(x, \mathcal{S})$ basically
coincides with $\G(x, S)$ for $S=\bigcup\mathcal{S}$. The only difference is the
labeling and that some
arrows of $\G(x, S)$ become multiple arrows in $\G(x,
\mathcal{S})$. In particular, the metrics induced on the sets of
vertices of graphs $\G(x, S)$ and $\G(x, \mathcal{S})$ coincide.

We denote by $B_{\mathcal{S}}(x, r)$ or just by $B(x, r)$ the ball of radius $r$ with center in $x$, seen
as a rooted oriented labeled graph.
We write $x\sim_r y$ if $B_{\mathcal{S}}(x, r)$ and $B_{\mathcal{S}}(y, r)$ are isomorphic.

\begin{defi}
\emph{Complexity} of $\mathcal{S}$ is the function $\delta(r,
\mathcal{S})$ equal to the number of $\sim_r$-equivalence classes.
\end{defi}

It is easy to see that $\delta(r, \mathcal{S})$ is finite for every
$r$ and $\mathcal{S}$.

\subsection{Examples}

\subsubsection{Shifts}
\label{sss:shifts}
Let $X$ be a finite alphabet containing more than one letter. Consider
the space $X^{\Z}$ of all bi-infinite words over $X$, i.e., maps
$w:\Z\arr X$. Denote by $s:X^{\Z}\arr X^{\Z}$ the shift map given by the rule
$s(w)(i)=w(i+1)$. The space $X^{\Z}$ is homeomorphic to the Cantor set
with respect to the direct product topology (where $X$ is
discrete).

A \emph{sub-shift} is a closed $s$-invariant subset
$\X\subset X^{\Z}$. We always assume that $\X$ has no isolated points.
For a sub-shift $\X$, consider the groupoid
$\mathfrak{S}$ of the germs of the 
action of $\Z$ on $\X$ generated by the shift. It is easy to see that
all germs of non-zero powers of the shift are non-trivial,
hence the groupoid $\mathfrak{S}$ coincides with the
groupoid $\Z\ltimes\X$ of the action.
As usual, we will identify $\X$ with the space of units $\mathfrak{S}^{(0)}$. The
set $S=\{(s, x)\;:\;x\in\X\}$ is an open compact generating set of $\mathfrak{S}$. The
Cayley graphs $\mathfrak{S}(w, S)$ are isomorphic to the Cayley graph of $Z$
with respect to the generating set $\{1\}$.

If $\X$ is \emph{aperiodic}, i.e., if it does not contain periodic
sequences, then $\mathfrak{S}$ is principal. Note that $\mathfrak{S}$ is always Hausdorff.

For $x\in X$, denote by $S_x$ set of germs of the restriction of $s$
onto the cylindrical set $\{w\in\X\;:\;w(0)=x\}$. Then $\mathcal{S}=\{S_x\}_{x\in
  X}$ is a covering of $S$ by disjoint clopen subsets of $S$. Then for
every $w\in\X$, the Cayley graph $\mathfrak{S}(w, \mathcal{S})$ basically
repeats $w$: its set of vertices is the set of germs $(s^n, w)$, $n\in
\Z$; for every $n$ we have an arrow from $(s^n, w)$ to $(s^{n+1}, w)$
labeled by $S_{w(n)}$.

In particular, we have 
\[\delta(n, \mathcal{S})=p_{\X}(2n),\]
where $p_{\X}(k)$ denotes the
number of words of length $k$ that appear as subwords of elements of
$\X$.

Complexity $p_{\X}(n)$ of subshifts is a well studied subject,
see~\cite{kurka:topsymb,ferenczi:complexity,cassaigneets:factorcomplexity}
and references therein.

Two classes of subshifts are especially interesting for us: Sturmian and
Toeplitz subshifts.

Let $\theta\in (0, 1)$ be an irrational number, and consider 
the rotation \[R_\theta:x\mapsto x+\theta\pmod{1}\] of the circle
$\R/\Z$. For a number $x\in\R/\Z$ not belonging to the
$R_\theta$-orbit of $0$, consider the
\emph{$\theta$-itinerary $I_{\theta, x}\in\{0, 1\}^{\Z}$} given by
\[I_{\theta, x}(n)=\left\{\begin{array}{ll} 0 & \text{if $x+n\theta\in (0,
      \theta)\pmod{1}$},\\
1 & \text{if $x+n\theta\in (\theta, 1)\pmod{1}$}.\end{array}\right.\]
In other words, $I_{\theta, x}$ describes the itinerary of $x\in\R/\Z$ under
the rotation $R_\theta$ with respect to the partition
$[0, \theta), [\theta, 1)$ of the circle $\R/\Z$. 
If $x$ belongs to the orbit of $0$, then we define two itineraries
$I_{\theta, x+0}=\lim_{t\to x+0}I_{\theta, t}$ and
$I_{\theta, x-0}=\lim_{t\to x-0}I_{\theta, t}$, where $t$ in the limits
belongs to the complement of the orbit of $0$.

The set $\X_\theta$ of all itineraries is a subshift of $\{0, 1\}^{\Z}$ called the
\emph{Sturmian subshift} associated with $\theta$. Informally, the space
$\X_\theta$ is obtained from the circle $\R/\Z$ by ``cutting'' it
along the $R_\theta$-orbit of $0$, i.e., by replacing each point
$x=n\theta$ by two copies $x+0$ and $x-0$. A basis of topology of
$\X_\theta$ is the set of arcs of the form $[n\theta+0,
m\theta-0]$. The shift is identified
in this model with the natural map induced by the
rotation $R_\theta$. 

Complexity $p_{\X_\theta}(n)$ of the Sturmian subshift is equal to the number of all possible
$R_\theta$-itineraries of length $n$. Consider the set
$\{R_\theta^{-k}(\theta)\}_{k=0, 1, \ldots, n}$. It separates the
circle $\R/\Z$ into $n+1$ arcs such that two points $x, y$ have equal
length $n$ segments $\{0, \ldots, n-1\}\arr\{0, 1\}$ of their
itineraries $I_{\theta, x}$, $I_{\theta, y}$ if and only if
they belong to one arc. It follows that $p_{\X_\theta}(n)=n+1$. The
subshifts of the form $\X_\theta$ and their elements are called
\emph{Sturmian} subshifts and \emph{Sturmian} sequences.

A sequence $w:X\arr\Z$ is a \emph{Toeplitz} sequence if it is not
periodic and for every
$n\in\Z$ there exists $p\in\N$ such that $w(n+kp)=w(n)$ for all
$k\in\Z$. Complexity of Toeplitz sequences is well studied.

It is known, for example,
(see~\cite[Proposition~4.79]{kurka:topsymb}) that for any
$1\le\alpha\le\beta\le\infty$ there exists a Toeplitz subshift $\X$
(i.e., closure of the shift orbit of a Toeplitz sequence)
such that
\[\liminf_{n\to\infty}\frac{\ln p_{\X}(n)}{\ln n}=\alpha,\qquad
\limsup_{n\to\infty}\frac{\ln p_{\X}(n)}{\ln n}=\beta.\]

The following theorem is proved by M.~Koskas in~\cite{koskas}.

\begin{theorem}
For every rational number $p/q>1$
and every positive increasing differentiable function $f(x)$ satisfying
$f(n)=o(n^\alpha)$ for all $\alpha>0$, and $nf'(n)=o(n^\alpha)$ for
all $\alpha>0$, there exists a Toeplitz subshift $\X$ and two
constants $c_1, c_2>0$ satisfying
$c_1f(n)n^{p/q}\le p_{\X}(n)\le c_2f(n)n^{p/q}$ for all $n\in\N$.
\end{theorem}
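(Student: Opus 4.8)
The plan is to realize $\X$ as the shift-orbit closure of an explicitly constructed Toeplitz sequence $w$, using the classical description of Toeplitz sequences by iterated \emph{periodic filling} (see~\cite{koskas} and the references therein). Fix once and for all a strictly increasing sequence of periods $p_0\mid p_1\mid p_2\mid\cdots$ with $p_0=1$, and put $q_k=p_k/p_{k-1}$. We build a nested sequence of $p_k$-periodic partial words $w_k\colon\Z\arr X\cup\{?\}$: $w_k$ agrees with $w_{k-1}$ wherever the latter is a letter, and on each residue class modulo $p_{k-1}$ that is still a hole we either leave $w_k$ a hole or fill it $p_k$-periodically; we arrange that within one period block of length $p_k$ there are exactly $N_k$ admissible ``types'' of such a filling, and that the density of remaining holes tends to $0$. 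Letting $w=\lim_k w_k$ we obtain a Toeplitz sequence: every position is eventually decided because of the hole-density condition, and $w$ is non-periodic as soon as at least two distinct types are used at infinitely many levels. Its orbit closure $\X$ is then minimal, and $w$ is uniformly recurrent. The guiding heuristic, which the rest of the argument makes precise, is
\[
p_{\X}(n)\ \asymp\ n\cdot\!\!\prod_{k\,:\,p_k\le Cn}\!\! N_k,
\]
so the problem reduces to choosing $(p_k)$ and $(N_k)$ so that the right-hand side is comparable to $f(n)\,n^{p/q}$.

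For the lower bound one arranges the construction so that from a length-$n$ factor of $w$ one can read off both the residue modulo $p_K$ of any position at which it occurs, where $p_K$ is the largest period $\le n$, and the tuple of filling types that were used at the levels $k$ with $p_k\le n$ inside that window; since $w$ is uniformly recurrent, all $\approx N_k$ types at each such level do occur, so the number of length-$n$ factors is at least $c_1\,n\prod_{p_k\le n}N_k$. For the upper bound, a window $[i,i+n)$ of $w$ is determined by: (i) the residue $i\bmod p_K$, where now $p_K$ is the smallest period $\ge n$, contributing at most $p_K\le Cn$ possibilities once the ratios $q_k$ are bounded; and (ii) the filling types chosen at the finitely many levels whose holes are still visible inside the window — by the hole-density bound only an $\varepsilon$-fraction of the window is undecided at level $K$, and the number of ways to complete it telescopes to $O\big(\prod_{p_k\le Cn}N_k\big)$. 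Hence $p_{\X}(n)\le c_2\,n\prod_{p_k\le Cn}N_k$, matching the lower bound up to absolute constants, and minimality of $\X$ guarantees that $p_{\X}$ is the complexity of the whole subshift.

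It remains to hit the target asymptotics, and this is the delicate point. Taking logarithms, we need
\[
\sum_{k\,:\,p_k\le n}\log N_k\ =\ \Big(\tfrac{p}{q}-1\Big)\log n\ +\ \log f(n)\ +\ O(1).
\]
Rationality of $p/q$ is used to keep the leading term integral: choosing each ratio $p_k/p_{k-1}$ to be a perfect $q$-th power makes $\big(\tfrac{p}{q}-1\big)\log(p_k/p_{k-1})$ the logarithm of an integer, hence exactly matchable by an integer factor in $N_k$. The correction $\log f$ is fed in through the remaining integer freedom in $N_k$, and here the hypotheses $f(n)=o(n^\alpha)$ and $nf'(n)=o(n^\alpha)$ for all $\alpha>0$ are exactly what is needed: the first says $f$ is sub-polynomial, so the periods $p_k$ may be spaced geometrically without $f$ ever overtaking the $n^{p/q}$ term; the second says $f$ varies slowly enough on each scale $[p_{k-1},p_k]$ that $f(p_k)\asymp f(p_{k-1})$ and $f$ stays comparable to a constant throughout that interval, which is precisely what forces the partial sums $\sum_{p_k\le n}\log N_k$ to track $\log f(n)$ uniformly in $n$, not merely along the subsequence $n=p_k$. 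Reconciling the two constraints — geometric-type spacing demanded by the $n^{p/q}$ part against $f$-adapted spacing demanded by the correction — is the main technical content; the sub-polynomial growth of $f$ is what makes them compatible, at the price of absolute rather than explicit constants $c_1,c_2$.
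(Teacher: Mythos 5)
First, a point of comparison: the paper does not prove this statement at all --- it is quoted as a theorem of M.~Koskas and the only ``proof'' in the text is the citation to~\cite{koskas}. So your sketch has to stand on its own as a reconstruction of Koskas's argument, and as it stands it has a genuine gap at its center.

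The gap is the two-sided estimate $p_{\X}(n)\asymp n\prod_{p_k\le Cn}N_k$, which is precisely the hard part of the theorem and which your sketch asserts rather than proves; moreover, the mechanism you offer for the lower bound does not work as stated. For a level $k$ with $p_k\le n$, the level-$k$ filling is a fixed $p_k$-periodic partial word, so every window of length $n$ sees the same level-$k$ pattern up to translation: the ``tuple of filling types used at levels with $p_k\le n$'' is determined by the residue of the window's starting position and therefore contributes only to the $O(n)$ positional factor --- it cannot multiply the factor count the way your lower bound claims, and ``all $N_k$ types occur somewhere in $w$ by uniform recurrence'' does not mean they vary independently inside windows of length $n$. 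The super-linear part of $p_{\X}(n)$ has to come from counting the distinct completions, as the window slides along $w$, of the holes still visible at scale $n$, i.e.\ from the levels whose periods are comparable to or larger than $n$; engineering that count to be $\asymp f(n)n^{p/q-1}$ uniformly in $n$ (not merely along the subsequence $n=p_k$), while keeping the hole density tending to zero (so that $w$ is Toeplitz) and the sequence non-periodic, is exactly the delicate bookkeeping that occupies Koskas's paper. Your remarks about where rationality of $p/q$ enters (ratios that are $q$-th powers) and where the hypotheses $f(n)=o(n^\alpha)$ and $nf'(n)=o(n^\alpha)$ are used (slow variation of $f$ across each scale) are plausible guesses at the shape of that argument, but they are not supported by any actual estimate here, so the proposal is an outline of a strategy rather than a proof.
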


\subsubsection{Groups acting on rooted trees}
Let $X$ be a finite alphabet, $|X|\ge 2$. Denote by $X^*$ the set of
all finite words (including the empty word $\varnothing$). We consider
$X^*$ as a rooted tree with root $\varnothing$ in which every word
$v\in X^*$ is connected to the words of the form $vx$ for all $x\in
X$. The \emph{boundary} of the tree is naturally identified with the
space $X^{\N}$ of all one-sided sequences $x_1x_2x_3\ldots$.
Every automorphism of the rooted tree $X^*$ naturally induces a
homeomorphism of $X^{\N}$.

Let $g$ be an automorphism of the tree $X^*$. For every $v\in X^*$
there exists a unique automorphism $g|_v$ of the tree $X^*$ such that
\[g(vw)=g(v)g|_v(w)\]
for all $w\in X^*$. We say that a group $G$ of automorphisms of $X^*$
is \emph{self-similar} if $g|_v\in G$ for every $g\in G$ and $v\in
X^*$. For every $v\in X^*$ and $w\in X^{\N}$ the germ $(g, vw)$
depends only on the quadruple $(v, g(v), g|_v, w)$.

\begin{examp}
\label{ex:admach}
Consider the automorphism $a$ of the binary tree $\{0, 1\}^*$ defined by the recursive rules
\[a(0w)=1w,\qquad a(1w)=0a(w).\]
It is called the \emph{adding machine}, or \emph{odometer}. The cyclic group generated by $a$ is
self-similar.
\end{examp}

\begin{examp}
\label{ex:grigorchuk}
Consider the automorphisms of $\{0, 1\}^*$ defined by the recursive rules
\[a(0w)=1w, a(1w)=0w\]
and
\begin{alignat*}{2}
b(0w)&=0a(w), & \qquad b(1w)&=1c(w),\\
 c(0w)&=0a(w), &\qquad  c(1w)&=1d(w),\\
d(0w)&=0w, &\qquad d(1w)&=1b(w).
\end{alignat*}
The group generated by $a, b, c, d$ is the \emph{Grigorchuk group}, see~\cite{grigorchuk:80_en}.
\end{examp}

For more examples of self-similar groups and their applications, see~\cite{nek:book}.

Let $G$ be a finitely generated self-similar group, and let $l(g)$
denote the length of an element $g\in G$ with respect to some fixed
finite generating set of $G$. The \emph{contraction coefficient} of
the group $G$ is the number
\[\lambda=\limsup_{n\to\infty}\limsup_{g\in G,
  l(g)\to\infty}\max_{v\in X^n}\frac{l(g|_v)}{l(g)}.\]
The group is said to be \emph{contracting} if $\lambda<1$.

For example, the adding machine action of $\Z$ and the Grigorchuk
group are both contracting with contraction coefficient $\lambda=1/2$.

\begin{proposition}
\label{pr:contractingestimates}
Let $G$ be a contracting self-similar group acting on the tree
$X^*$, and let $\lambda$ be the contraction coefficient. Consider the
groupoid of germs $\G$ of the action of $G$ on $X^{\N}$, let $S$
be a finite generating set of $G$, and let $\mathcal{S}$ be the set of
$\G$-bisets of the form $\{(s, w)\;:\;w\in X^{\N}\}$ for $s\in S$.
Then we have
\[\limsup_{n\to\infty}\frac{\log\overline\gamma(n, \mathcal{S})}{\log n}\le
\frac{\log|X|}{-\log\lambda},\qquad
\limsup_{n\to\infty}\frac{\log\delta(n, \mathcal{S})}{\log
  n}\le \frac{\log|X|}{-\log\lambda}.\]
\end{proposition}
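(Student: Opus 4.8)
\emph{Setup and strategy.} The plan is to derive both estimates from a single quantitative consequence of contraction, and then to identify, for a fixed $w\in X^{\N}$, exactly what finite data pins down the rooted labelled ball $B_{\mathcal S}(w,n)$. Let $l$ denote the word length on $G$ with respect to $S$, put $P_k=\{g\in G:l(g)\le k\}$, and say that $f\in G$ \emph{fixes a cylinder around} $\xi\in X^{\N}$ if $f(v)=v$ and $f|_v=1$ for some prefix $v$ of $\xi$. The vertex set of $B_{\mathcal S}(w,n)$ is $\{(g,w):g\in P_n\}\subset\G_w$, and in $\G$ one has $(g_1,\xi)=(g_2,\xi)$ precisely when $g_2^{-1}g_1$ fixes a cylinder around $\xi$ (because $g_1=g_2$ on a cylinder $vX^{\N}$ iff $g_1(v)=g_2(v)$ and $g_1|_v=g_2|_v$). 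Since $G$ is contracting with coefficient $\lambda$, the following standard fact is available (see~\cite{nek:book}): for every $\lambda'\in(\lambda,1)$ there are a constant $C$ and a finite set $\mathcal F\subset G$ such that $g|_v\in\mathcal F$ whenever $l(g)\le k$ and $|v|\ge\frac{\log k}{-\log\lambda'}+C$. I will prove both displayed inequalities with $\lambda$ replaced by an arbitrary $\lambda'\in(\lambda,1)$; letting $\lambda'\to\lambda$ then yields the proposition, since $\frac{\log|X|}{-\log\lambda'}\to\frac{\log|X|}{-\log\lambda}$.

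\emph{Growth.} Fix $w$ and set $m=\big\lceil\frac{\log n}{-\log\lambda'}+C\big\rceil$; let $v$ be the length-$m$ prefix of $w$ and $\eta=\sigma^m(w)$, so that $g|_v\in\mathcal F$ for every $g\in P_n$. Consider the assignment $(g,w)\mapsto\big(g(v),\,(g|_v,\eta)\big)$ from the vertex set of $B_{\mathcal S}(w,n)$ into the finite set $X^m\times\{(q,\eta):q\in\mathcal F\}$. Using $g(v\zeta)=g(v)\cdot g|_v(\zeta)$, one checks that for $g_1,g_2\in P_n$ the pair of equalities $g_1(v)=g_2(v)$ and $(g_1|_v,\eta)=(g_2|_v,\eta)$ holds if and only if $(g_1,w)=(g_2,w)$; hence this assignment is a well-defined injection, and $|B_{\mathcal S}(w,n)|\le|X|^m|\mathcal F|$. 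This bound is uniform in $w$, so $\overline\gamma(n,\mathcal S)\le|X|^m|\mathcal F|$ with $m=\frac{\log n}{-\log\lambda'}+O(1)$, which gives $\limsup_n\frac{\log\overline\gamma(n,\mathcal S)}{\log n}\le\frac{\log|X|}{-\log\lambda'}$.

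\emph{Complexity.} Let $F_w=\{f\in P_{2n+1}:f\text{ fixes a cylinder around }w\}$. The rooted labelled graph $B_{\mathcal S}(w,n)$ is recovered from $F_w$ alone: its vertices are the classes of $P_n$ under $g_1\equiv g_2\iff g_2^{-1}g_1\in F_w$, and, for each $t\in S\cup S^{-1}$, there is an edge labelled by the bisection $\{(t,\xi):\xi\in X^{\N}\}\in\mathcal S\cup\mathcal S^{-1}$ from the class of $g_1$ to the class of $g_2$ exactly when $g_2^{-1}tg_1\in F_w$; all elements arising this way lie in $P_{2n+1}$. In particular $F_w=F_{w'}$ implies $B_{\mathcal S}(w,n)\cong B_{\mathcal S}(w',n)$. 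Now put $M=\big\lceil\frac{\log(2n+1)}{-\log\lambda'}+C\big\rceil$, so that $f|_v\in\mathcal F$ for all $f\in P_{2n+1}$ and all $v$ with $|v|=M$; let $v_0$ be the length-$M$ prefix of $w$ and $\eta_0=\sigma^M(w)$. I claim $F_w$ is a function of the pair $\big(v_0,\ \{q\in\mathcal F:q\text{ fixes a cylinder around }\eta_0\}\big)$: for $f\in P_{2n+1}$, either $f$ fixes a cylinder $pX^{\N}$ with $|p|\le M$, which is determined by $v_0$ and $f$; or every such cylinder has the form $v_0uX^{\N}$ with $u$ a nonempty prefix of $\eta_0$, and this occurs exactly when $f(v_0)=v_0$ and $f|_{v_0}\in\mathcal F$ fixes a cylinder around $\eta_0$, which is read off from $v_0$ and from the indicated subset of $\mathcal F$. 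Since that subset is one of at most $2^{|\mathcal F|}$, there are at most $|X|^M2^{|\mathcal F|}$ possibilities for $F_w$, whence $\delta(n,\mathcal S)\le|X|^M2^{|\mathcal F|}$ with $M=\frac{\log n}{-\log\lambda'}+O(1)$, giving $\limsup_n\frac{\log\delta(n,\mathcal S)}{\log n}\le\frac{\log|X|}{-\log\lambda'}$.

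\emph{Where the difficulty lies.} The growth half is essentially bookkeeping once the contraction estimate is in hand. The delicate point in the complexity half is that $B_{\mathcal S}(w,n)$ is \emph{not} in general a function of a length-$O(\log n)$ prefix of $w$ alone: the germ of a fixed element of $\mathcal F$ at the tail $\sigma^M(w)$ can depend on an arbitrarily long prefix of that tail — for instance the germ of the generator $d$ of the Grigorchuk group at $1^k0\dots$ depends on $k\bmod 3$. The resolution, and the heart of the argument, is that these germs nevertheless take only boundedly many jointly relevant values, captured by the extra finite parameter $\{q\in\mathcal F:q\text{ fixes a cylinder around }\eta_0\}$; the main thing to be verified with care is that this parameter together with the prefix $v_0$ really does recover $F_w$, and that $F_w$ recovers the labelled ball, keeping track of the non-Hausdorff groupoid-of-germs structure throughout.
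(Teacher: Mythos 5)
Your proof is correct and takes essentially the same route as the paper's: contraction forces all sections $g|_v$ with $l(g)\le n$ and $|v|\gtrsim \log n/(-\log\rho)$ into a fixed finite set, the germs in the ball are then counted via the pairs $(g(v),g|_v)$, and the ball $B_{\mathcal{S}}(w,n)$ is shown to depend only on the length-$O(\log n)$ prefix of $w$ together with the subset of the finite set whose germs at the tail are units (your parameter is exactly the paper's $T_{w'}$). Your write-up merely spells out in more detail the paper's terser claim that the labelled ball is recovered from knowing, for the boundedly long elements $g_2^{-1}tg_1$, whether their germs at $w$ are units.
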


\begin{proof}
Let $\rho$ be any number in the interval $(\lambda, 1)$. Then there
exist $n_0$, $l_0$ such that for all elements $g\in G$ such that
$l(g)>l_0$ we have $l(g|_v)\le \rho^{n_0} l(g)$ for all $v\in
X^{n_0}$. It follows that there exists a finite set $\mathcal{N}$
such that $g|_v\in\mathcal{N}$ for all $v\in X^*$ and for every $g\in
G\setminus\mathcal{N}$ we have $l(g|_v)\le \rho^{n_0} l(g)$ for all words
$v\in X^*$ of length at least $n_0$.

Then for every $g\in G$ and for every word $v\in
X^*$ of length at least $\left\lfloor\frac{\log l(g)-\log l_0}{-\log
    \rho}\right\rfloor+n_0$ we
have $g|_v\in\mathcal{N}$. 
Let $w=x_1x_2\ldots\in X^{\N}$, and denote $v=x_1x_2\ldots x_n$,
$w'=x_{n+1}x_{n+2}\ldots$ for $n=\left\lfloor\frac{\log r-\log l_0}{-\log
    \rho}\right\rfloor+n_0$. Then for fixed $w$ and all $g$ such that
$l(g)\le r$, the germ $(g, w)$ depends only on
$g(v)$ and $g|_v$. There are not more than $|X|^n$ possibilities for
$g(v)$, hence the number of germs $(g, w)$ is not more than
\[|\mathcal{N}|\cdot|X|^n\le|\mathcal{N}|\exp\left(\log|X|\left(\frac{\log
      r-\log l_0}{-\log\rho}+n_0\right)\right)\le
C_1r^{\frac{\log|X|}{-\log\rho}}\]
  for $C_1=|\mathcal{N}|\cdot|X|^{\frac{\log
      l_0}{\log\rho}+n_0}$. Consequently, for every $\rho\in (\lambda,
  1)$ there exists $C_1>0$ such that
\[\overline\gamma(r, \mathcal{S})\le C_1r^{\frac{\log|X|}{-\log\rho}},\]
hence $\limsup_{r\to\infty}\frac{\log\overline\gamma(r,
  \mathcal{S})}{\log r}\le\frac{\log|X|}{-\log\lambda}$.

It is enough, in order to know the ball $B_{\mathcal{S}}(w, r)$, to
know for every word $g\in G$ of length at most $2r$ whether the germ
$(g, w)$ is a unit. Let, as above, $w=vw'$, where length of $v$ is
$n=\left\lfloor\frac{\log 2r-\log
    l_0}{-\log\rho}\right\rfloor+n_0$. For every $g\in G$ of
length at most $2r$ the germ $(g, w)$ is a unit if and
only if $g(v)=v$ and $(g|_v, w')$ is a unit. We have
$g|_v\in\mathcal{N}$, so
$B_{\mathcal{S}}(w, r)$ depends only on $v$ and the set
$T_{w'}=\{h\in\mathcal{N}\;:\;(h, w')\in\G^{(0)}\}$. Consequently,
\[\delta(r, \mathcal{S})\le 2^{|\mathcal{N}|}\cdot |X|^n\le
C_2r^{\frac{\log|X|}{-\log\rho}},\]
where $C_2=2^{|\mathcal{N}|}|X|^{\frac{\log l_0-\log 2}{\log\rho}+n_0}$, which shows that
$\limsup_{r\to\infty}\frac{\log\delta(r, \mathcal{S})}{\log r}\le\frac{\log|X|}{-\log\lambda}$.
\end{proof}

Both estimates in Proposition~\ref{pr:contractingestimates} are not
sharp in general. For example, consider a self-similar action of
$\Z^2$ over the alphabet $X$ of size 5 associated with the virtual endomorphism
given by the matrix
$A=\left(\begin{array}{cc} 2 & 1\\ 1 &
    3\end{array}\right)^{-1}=\left(\begin{array}{rr}3/5 & -1/5\\ -1/5 &
        2/5\end{array}\right)$, see~\cite[2.9, 2.12]{nek:book}
    and~\cite{neksid} for details. Note that the eigenvalues
of $A$ are $\left(\frac{5\pm\sqrt{5}}{2}\right)^{-1}\in
    (0, 1)$, hence the contraction coefficient is $\lambda=\frac
    2{5-\sqrt{5}}=\frac{5+\sqrt{5}}{10}$.
On the other hand $\overline\gamma(r, \mathcal{S})$ grows as a
quadratic polynomial, while $\delta(r, \mathcal{S})$ is bounded.

\section{Convolution algebras}

\subsection{Definitions}

Let $\G$ be an \'etale groupoid, and let $\Bbbk$ be a
field. \emph{Support} of a function $f:\G\arr\Bbbk$ is closure of the
set of points $x\in\G$ such that $f(x)\ne 0$. If $f_1, f_2$ are
functions with compact support, then their \emph{convolution} is given
by the formula
\[f_1*f_2(g)=\sum_{h\in\G_{\be(g)}}f_1(gh^{-1})f_2(h).\]
Note that since $f_2$ has compact support, the set of elements
$h\in\G_{\be(g)}$ such that $f_2(h)\ne 0$ is finite.

It is easy to see that if $f_1, f_2$ are supported on the space of
units, then their convolution coincides with their pointwise
product. If $F_1, F_2$ are bisections, then their characteristic
functions satisfy $1_{F_1}*1_{F_2}=1_{F_1F_2}$.

The set of all functions $f:\G\arr\Bbbk$ with compact support forms an
algebra over
$\Bbbk$ with respect to convolution. But this algebra is too big,
and its definition does not use the topology of $\G$ much. On the other
hand, the algebra of all continuous functions (with discrete topology
on $\Bbbk$) is too small in the non-Hausdorff case. Therefore, we
adopt the next definition, following Connes~\cite{conn:foliations}, see also~\cite{paterson:gr} 
and~\cite{steinberg:groupoidapproach}.

\begin{defi}
The \emph{convolution algebra} $\Bbbk[\G]$ is the $\Bbbk$-algebra generated by
the characteristic functions $1_F$ of open compact $\G$-bisections (with
respect to convolution).
\end{defi}

If $\G$ is Hausdorff, then $\Bbbk[\G]$ is the algebra of all
continuous (i.e., locally constant) functions $f:\G\arr\Bbbk$, where
$\Bbbk$ has discrete topology. In the non-Hausdorff case the
algebra $\Bbbk[\G]$ contains discontinuous functions
(e.g., characteristic functions of non-closed open compact
bisections).

From now on we will use the usual multiplication sign for convolution.
The unit of the algebra $\Bbbk[\G]$ is the characteristic function of
$\G^{(0)}$, which we will often denote just by $1$.

If $\G=G\ltimes\X$ is the groupoid of an action, then $\Bbbk[\G]$
is generated by the commutative algebra of locally
constant functions $f:\X\arr\Bbbk$ (with pointwise multiplication and
addition) and the group ring $\Bbbk[G]$ subject to relations
\[g^{-1}\cdot f\cdot g=f\circ g,\]
for all $f:\X\arr\Bbbk$ and $g\in G$, where $f\circ g:\X\arr\Bbbk$ is
given by $(f\circ g)(x)=f(g(x))$.
In other words, it is the \emph{cross-product} of the algebra of
functions and the group ring.
 
Let $\mathcal{T}\subset\G^{(0)}$ be the set of units with trivial
isotropy groups. The set $\mathcal{T}$ is $\G$-invariant, i.e., is a
union of $\G$-orbits.

\begin{defi}
We say that $\G$ is \emph{essentially principal} if the set
$\mathcal{T}$ is dense in $\G^{(0)}$. It is \emph{principal} if
$\mathcal{T}=\G^{(0)}$. The groupoid $\G$ is said to be \emph{minimal}
if every $\G$-orbit is dense in $\G^{(0)}$.
\end{defi}

\begin{examp}
For every homeomorphism $g$ of a metric space $\mathcal{X}$, the set of
points $x\in\mathcal{X}$ such that $g(x)=x$ and the germ $(g, x)$ is
non-trivial is a closed nowhere dense set. It follows that if $G$ is
a countable group of homeomorphisms of $\mathcal{X}$, then groupoid of
germs of the action is essentially principal.
\end{examp}

Simplicity of essentially principal minimal groupoids is a well known fact, 
see~\cite{brownclarketc:simplicity} and a $C^*$-version in~\cite[Proposition~4.6]{renault:groupoids}. 
We provide a proof of the following simple proposition just for completeness.

\begin{proposition}
\label{pr:simple}
Suppose that $\G$ is essentially principal and minimal.
Let $I$ be the set of functions $f\in\Bbbk[\G]$ such that $f(g)=0$ for
every $g\in\G$ such that $\be(g), \en(g)\in\mathcal{T}$. Then $I$ is a
two-sided ideal, and the algebra $\Bbbk[\G]/I$ is simple. In
particular, if $\G$ is Hausdorff, then $\Bbbk[\G]$ is simple.
\end{proposition}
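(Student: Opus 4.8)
The plan is to verify first that $I$ is a two-sided ideal, then that the quotient is simple. For the ideal claim, note that $\mathcal{T}$ being $\G$-invariant means that if $g\in\G$ has $\be(g)\in\mathcal{T}$ then automatically $\en(g)\in\mathcal{T}$; so $I$ consists precisely of those $f$ vanishing on the ``open subgroupoid'' $\G|_{\mathcal{T}}=\{g : \be(g)\in\mathcal{T}\}$. Given $f\in I$ and $h\in\Bbbk[\G]$, the convolution formula $fh(g)=\sum_{g_1g_2=g}f(g_1)h(g_2)$ shows that if $g\in\G|_{\mathcal{T}}$ then every factorization $g=g_1g_2$ has $g_1,g_2\in\G|_{\mathcal{T}}$ (since $\be(g_2)=\be(g)$ and $\be(g_1)=\en(g_2)$ lie in the invariant set $\mathcal{T}$), hence $f(g_1)=0$ for all such terms and $fh(g)=0$; symmetrically for $hf$. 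So $I$ is a two-sided ideal, and $\Bbbk[\G]/I$ makes sense. If $\G$ is Hausdorff, essential principality shows $\mathcal{T}$ is dense and every element of $\Bbbk[\G]$ is a genuine continuous (locally constant) function, so vanishing on the dense set $\G|_{\mathcal{T}}$ forces vanishing everywhere; thus $I=0$ and simplicity of the quotient gives simplicity of $\Bbbk[\G]$ directly.

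For simplicity of $\Bbbk[\G]/I$, I would take a nonzero two-sided ideal $J/I$ with $J\supsetneq I$ and show $J=\Bbbk[\G]$; equivalently, show $1\in J$. Pick $f\in J\setminus I$. The first step is a \emph{normalization near the diagonal}: since $f\notin I$, there is $g_0\in\G$ with $\be(g_0),\en(g_0)\in\mathcal{T}$ and $f(g_0)\ne 0$. Choose a small open compact bisection $F$ containing $g_0$ and multiply $f$ on the left by $1_{F^{-1}}$: since the isotropy at $x_0=\be(g_0)\in\mathcal{T}$ is trivial, one arranges (shrinking $F$ and using continuity/local constancy of $f$ on the Hausdorff compact piece $F$) that the element $f_1 = 1_{F^{-1}}\,f$ satisfies $f_1(x_0)\ne 0$ while the ``off-diagonal'' part of $f_1$ is supported on bisections disjoint from $\G^{(0)}$. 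Concretely, $\supp(f_1)\cap\G^{(0)}$ is a nonempty clopen set $U$, and after multiplying by a scalar and by $1_U$ on both sides we may assume $f_1 = 1_U + e$ where $e$ is supported off the units. The precise statement I would aim for is: \emph{for any $f\in\Bbbk[\G]\setminus I$ there exist $a,b\in\Bbbk[\G]$ such that $afb = 1_U + e$ with $U\ne\emptyset$ clopen, $e$ supported on finitely many open compact bisections each disjoint from $\G^{(0)}$, and $\be(\supp e)\subseteq U$.}

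The second step is a \emph{shrinking argument} to kill $e$. Write $e=\sum_{i=1}^k c_i 1_{E_i}$ with $E_i$ bisections disjoint from $\G^{(0)}$. Because $\G$ is essentially principal, the set of $x\in U$ whose isotropy is trivial is dense; for such $x$, none of the $E_i$ contains an element with origin and target both equal to $x$, so for each $i$ there is a small clopen neighborhood $W_i$ of $x$ in $U$ with $E_i W_i \cap W_i = \emptyset$ (using that $\be,\en$ are homeomorphisms on $E_i$ and Hausdorffness of $\G^{(0)}$). Taking $W = \bigcap_i W_i$, a nonempty clopen subset of $U$, we get $1_W (1_U+e) 1_W = 1_W + 1_W e 1_W = 1_W$, because each $1_W 1_{E_i} 1_W = 1_{W\cap E_i W \cap \dots}$ vanishes. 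Hence $1_W\in J$ for some nonempty clopen $W\subseteq\G^{(0)}$. The final step uses \emph{minimality}: the $\G$-saturation of $W$ is dense, and since $W$ is clopen (hence compact) and $\G$ is generated by open compact bisections, finitely many translates $F_j W F_j^{-1}$ cover $\G^{(0)}$; as $1_{F_j}1_W 1_{F_j^{-1}}\in J$ and a finite sum of such clopen-unit-supported elements dominates $1_{\G^{(0)}}$, a standard combinatorial argument (passing to disjoint pieces, as in the description of $\Bbbk[\G]$ as a quotient of the semigroup algebra $\mathcal{B}(\G)$) produces $1_{\G^{(0)}}=1\in J$, so $J=\Bbbk[\G]$ and $J/I$ is everything.

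The main obstacle is the normalization step: controlling the off-diagonal part of $f$ near a point of $\mathcal{T}$, in the non-Hausdorff setting, where $f$ need not be continuous and elements may fail to have disjoint neighborhoods. The key is that $f$, being a finite $\Bbbk$-linear combination of characteristic functions of open compact bisections, is continuous when restricted to each such bisection (which is Hausdorff), and that triviality of isotropy at $x_0$ lets one separate the finitely many bisections through points near $x_0$ from the unit space by shrinking to a small enough clopen neighborhood; once that separation is achieved the shrinking and minimality steps are routine.
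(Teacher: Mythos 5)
Your treatment of the ideal property, the Hausdorff reduction (density of $\{g:\be(g),\en(g)\in\mathcal{T}\}$ plus continuity of elements of $\Bbbk[\G]$ gives $I=0$), and the last two steps are sound, and the final step (get $1_W\in J$ for a nonempty clopen $W$, then use minimality, compactness and disjointification of a clopen cover of $\G^{(0)}$ to obtain $1\in J$) coincides with the paper's. Structurally you take a two-step route in the middle -- first normalize $f$ to $1_U+e$ with $e$ carried by bisections missing $\G^{(0)}$, then kill $e$ by shrinking around a trivial-isotropy point -- whereas the paper normalizes in one stroke to $f(g_0)1_U$ with no error term. Your shrinking step is correct in substance (though the condition you need is that no element of $E_i$ has \emph{both} origin and target in $W$, i.e.\ $WE_iW=\emptyset$, not $E_iW\cap W=\emptyset$, which is vacuous once $E_i\cap\G^{(0)}=\emptyset$).

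The genuine gap is the normalization step, which you flag as the main obstacle but do not prove, and the mechanism you offer (continuity of $f$ on each bisection, ``separating the finitely many bisections through points near $x_0$ from the unit space by shrinking'') does not suffice in the non-Hausdorff case. Writing $f=\sum_i\alpha_i1_{F_i}$ and $A=\{i:g_0\in F_i\}$, after multiplying by $1_{(F_{i_0}U)^{-1}}$ and $1_U$ the off-diagonal terms are $1_{(F_{i_0}U)^{-1}F_jU}$, $j\notin A$, and such a term meets $\G^{(0)}$ exactly on the locus where the germs of $F_j$ agree with those of $F_{i_0}$. That locus is open but need not be closed, so no appeal to openness or to continuity of $f$ on the individual (Hausdorff) bisections rules out that it accumulates at $x_0$ -- this is precisely the phenomenon of Example~\ref{ex:nonHausdorff}. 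What closes the gap is the paper's target-comparison argument: since $x_0=\be(g_0)\in\mathcal{T}$, for $j\notin A$ with $x_0\in\be(F_j)$ the elements $F_jx_0$ and $g_0$ have the same origin but are distinct, so trivial isotropy forces $\en(F_jx_0)\ne\en(g_0)$; continuity of the target maps into the Hausdorff space $\G^{(0)}$ then yields a clopen $U\ni x_0$ with $F_iU=F_{i_0}U$ for all $i\in A$ (also needed, and not addressed by you, to make the diagonal part an exact scalar multiple of $1_U$), and for each $j\notin A$ either $U\cap\be(F_j)=\emptyset$ or $\en(F_jU)\cap\en(F_{i_0}U)=\emptyset$. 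With this choice $1_{(F_{i_0}U)^{-1}}f1_U=f(g_0)1_U$ on the nose, so the error term $e$ vanishes and your second step becomes superfluous; conversely, without some argument of this kind your claimed decomposition $afb=1_U+e$ is unsubstantiated exactly where the hypothesis ``essentially principal'' must enter.
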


\begin{proof}
The fact that $I$ is a two-sided ideal follows directly from the fact
that $\mathcal{T}$ is $\G$-invariant.

In order to prove simplicity of $\Bbbk[\G]$ it is enough to show that
if $f\in\Bbbk[\G]\setminus I$, then there exist elements $a_i,
b_i\in\Bbbk[\G]$ such that $\sum_{i=1}^ka_ifb_i=1$.

If $f\in\Bbbk[\G]\setminus I$, then there exists $g\in\G$ such that
$\be(g), \en(g)\in\mathcal{T}$ and $f(g)\ne 0$. Let
$f=\sum_{i=1}^m\alpha_i1_{F_i}$, where $F_i$ are open compact
$\G$-bisections. Let $A=\{1\le i\le m\;:\;g\in F_i\}$. Then
$f(g)=\sum_{i\in A}\alpha_i$. Since $\be(g)\in\mathcal{T}$, an
equality of targets $\en(F_i\be(g))=\en(F_j\be(g))$ implies the
equality $F_i\be(g)=F_j\be(g)$ of groupoid elements. It follows that
$\en(F_i\be(g))\ne\en(g)$ for every $i\notin A$. We can find therefore
a clopen neighborhood $U$ of $\be(g)$ such that $U\subset\be(F_i)$, $F_iU=F_jU$, for all
$i, j\in A$, $U\cap\be(F_j)=\emptyset$ for all $j\notin A$,
and $\en(F_iU)\cap\en(F_jU)=\emptyset$ for all $i\in A$ and $j\notin
A$.  Denote $F_iU=F$ for any $i\in A$. 
We have $1_{F^{-1}}f1_U=\sum_{i\in A}\alpha_i 1_U$. It follows that
$1_U=\alpha 1_{F^{-1}}f1_U$ for some $\alpha\in\Bbbk$.

The groupoid $\G$ is minimal, hence for every $x\in\G^{(0)}$ there
exists $h\in\G$ such that $\be(h)=x$ and $\en(h)\in U$. There exists
therefore an open compact $\G$-bisection $H$ such that $x\in\be(H)$ and
$\en(H)\subset U$. Then $1_{\be(H)}=1_{H^{-1}}1_U1_H=\alpha
1_{H^{-1}F^{-1}}f1_U1_H$. It follows that $\G^{(0)}$ can be covered by
a finite collection of sets $V_i$ such that $1_{V_i}$ can be written
in the form $a_ifb_i$ for some $a, b\in\Bbbk[G]$. Note that if
$V_i'$ is a clopen subset of $V_i$, then $1_{V_i'}=1_{V_i'}1_{V_i}$,
hence we may replace the covering $\{V_i\}$ by a finite covering by disjoint
clopen sets. But in that case we have $1=\sum 1_{V_i}$.
\end{proof}

\subsection{Growth of $\Bbbk[\G]$}

\begin{theorem}
\label{th:growth} Let $\G$ be an \'etale groupoid with compact totally
disconnected unit space.
Let $\mathcal{S}$ be a finite set of open compact $\G$-bisections. Let
$V\subset\Bbbk[\G]$ be the $\Bbbk$-subspace generated by the
characteristic functions of the elements of $\mathcal{S}$.
Then
\[\dim V^n\le\overline\gamma(n, \mathcal{S})\delta(n, \mathcal{S}).\]
\end{theorem}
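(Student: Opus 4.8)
The plan is to bound $\dim V^n$ by counting, for each product $1_{A_1}1_{A_2}\cdots 1_{A_n} = 1_{A_1A_2\cdots A_n}$ of generators $A_i\in\mathcal S\cup\mathcal S^{-1}$, how much information is needed to reconstruct the bisection $A_1A_2\cdots A_n$ (up to the relations defining $\Bbbk[\G]$). Since $V^n$ is spanned by such products, and each such product is the characteristic function of an open compact bisection $F\subseteq (S\cup S^{-1})^n$, it suffices to bound the number of distinct such $F$ — or rather, to bound the dimension of the span of the $1_F$, which is no larger than the number of distinct functions $x\mapsto 1_F(\text{the element of }F\text{ over }x)$ read off fiberwise.

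First I would fix the key reformulation: an open compact bisection $F$ with $F\subseteq(S\cup S^{-1})^n$ and $\be(F)\subseteq\G^{(0)}$ is determined by the function that assigns to each unit $x\in\be(F)$ the unique element $g\in F$ with $\be(g)=x$, together with the clopen set $\be(F)$. The element $g=A_1\cdots A_n$ applied to $x$ is the endpoint of a path of length $\le n$ starting at $x=\be(g)$ in the Cayley graph $\G(x,\mathcal S)$; equivalently, $g$ is an element of $\G_x$ lying in the ball $B_{\mathcal S}(x,n)$. So once we fix $x$, there are at most $\overline\gamma(n,\mathcal S)$ possibilities for the value $gx\in\G_x$, and which of these a given word $A_1\cdots A_n$ produces is determined by the isomorphism type of the labeled rooted ball $B_{\mathcal S}(x,n)$. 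That is: if $x\sim_n y$, then the map $\G_x\supseteq B_{\mathcal S}(x,n)\to B_{\mathcal S}(y,n)\subseteq\G_y$ induced by the ball isomorphism carries $A_1\cdots A_n\,x$ to $A_1\cdots A_n\,y$ for every word of length $\le n$, and moreover $A_1\cdots A_n\,x = R_1\cdots R_m\,x$ iff $A_1\cdots A_n\,y = R_1\cdots R_m\,y$.

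The main step is then to organize this into a dimension count. Let $x_1,\dots,x_\delta$ be representatives of the $\sim_n$-classes, where $\delta=\delta(n,\mathcal S)$. For a word $W=A_1\cdots A_n$ in $\mathcal S\cup\mathcal S^{-1}$ of length $\le n$, record the vector $\Phi(W) = (W x_1, Wx_2,\dots, Wx_\delta)\in \prod_j B_{\mathcal S}(x_j,n)$, a tuple with at most $\overline\gamma(n,\mathcal S)^{\delta}$ possible values; but the point is finer — I claim the linear map $1_W\mapsto 1_W$ into $V^n$ factors through a space of dimension $\le \overline\gamma(n,\mathcal S)\,\delta(n,\mathcal S)$. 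Concretely: define, for each class representative $x_j$ and each $v\in B_{\mathcal S}(x_j,n)$, a linear functional $\ell_{j,v}$ on $V^n$ by $\ell_{j,v}(f) = f(v\text{'s image in }\G_x)$ summed appropriately over the clopen class $[x_j]\cap$ (domain) — more carefully, evaluate $f\in V^n$ as a function on $\G$ at the unique point of $\be^{-1}(x)$ landing in the relevant fiber-position $v$, for $x$ ranging over the $\sim_n$-class of $x_j$; since $f$ restricted to that class is constant in the ``shape'' variable, finitely many such evaluations (one per $v$, per class) separate points of $V^n$. This gives an injection $V^n\hookrightarrow \Bbbk^{\overline\gamma(n,\mathcal S)\cdot\delta(n,\mathcal S)}$, hence the bound.

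I expect the main obstacle to be the bookkeeping in the non-Hausdorff case: an element $f\in V^n$ is a $\Bbbk$-linear combination of characteristic functions of open compact bisections, and such functions can be discontinuous, so "evaluate $f$ at the point over $x$ in fiber-position $v$" must be checked to be a well-defined linear functional that is locally constant on each $\sim_n$-class (so that the $\sim_n$-classification genuinely reduces the count) and that collectively these functionals separate $V^n$. The resolution is that each $f\in V^n$ is a function $\G\to\Bbbk$ supported on $(S\cup S^{-1})^n$ whose value at any $g\in\G_x$ depends only on the position of $g$ in the rooted labeled ball $B_{\mathcal S}(x,n)$ — this is exactly the content of $x\sim_n y$ — and if $f$ vanishes at all fiber-positions over all class representatives then $f\equiv 0$ on its support, hence $f=0$ as an element of $\Bbbk[\G]$. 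Verifying that last implication rigorously (that vanishing on a dense-enough set of fibers forces the function, not just its restriction to bisections, to be zero in $\Bbbk[\G]$) is the delicate point, handled by writing $f$ as a finite sum over open compact bisections refining $\mathcal S^{\pm}$-products and noting that fiberwise values determine such a sum modulo the defining relations $F-(F_1+F_2)$.
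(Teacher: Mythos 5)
Your proposal is correct, but it proves the bound by a route dual to the paper's. The paper exhibits a small \emph{spanning set}: it forms, for each $x$, the clopen set $A_x$ and the set $B_x$ of points on which all length-$n$ products of elements of $\mathcal S$ behave exactly as at $x$, notes that $B_x$ depends only on the $\sim_n$-class of $x$ (so there are at most $\delta(n,\mathcal S)$ such sets), refines this cover to a partition of $\G^{(0)}$ by disjoint clopen sets $\widetilde B_{x_i}$ via a shrinking/disjointification lemma, and then writes each $1_F$, $F\in\mathcal S^n$, as $\sum_i 1_{F\cdot\widetilde B_{x_i}}$, where for fixed $i$ the set $F\cdot\widetilde B_{x_i}$ depends only on the groupoid element $F\cdot x_i$; this yields at most $\overline\gamma(n,\mathcal S)\delta(n,\mathcal S)$ spanning functions. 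You instead bound the dimension by \emph{separating functionals}: evaluation $f\mapsto f(v)$ at the vertices $v$ of $B_{\mathcal S}(x_j,n)$, with $x_j$ running over a transversal of $\sim_n$, gives at most $\overline\gamma(n,\mathcal S)\delta(n,\mathcal S)$ functionals, and they are jointly injective on $V^n$ because for $f=\sum_W c_W 1_W$ (with $W$ a length-$n$ word in $\mathcal S$) one has $f(g)=\sum_{W:\,g\in W}c_W$, and $g\in W$ holds iff the $n$-ball at $\be(g)$ contains the path from the root labeled by the reversal of $W$ ending at $g$ --- a condition preserved by rooted labeled ball isomorphisms, since the labeled Cayley graph is deterministic (each vertex has at most one incident arrow per label in each direction); hence vanishing on the fibers over the representatives forces $f\equiv 0$. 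Your route buys simplicity: no topological lemma is needed, and the non-Hausdorff case needs no extra care, because $\Bbbk[\G]$ is by definition a subspace of the functions $\G\to\Bbbk$, so ``$f=0$ in $\Bbbk[\G]$'' literally means pointwise vanishing --- the ``delicate point'' about the relations $F-(F_1+F_2)$ that you flag at the end is therefore vacuous, and your vaguer intermediate formulation (``summed appropriately over the clopen class'') should simply be replaced by plain evaluation at $v\in\G_{x_j}$, which is what your final paragraph in effect does. The paper's route buys an explicit decomposition of each $1_F$ into few characteristic functions of bisections, a picture reused in the finite-generation argument of Proposition~\ref{prop:expansive}. One harmless slip: $V$ is spanned by the $1_A$ for $A\in\mathcal S$ only, so elements of $V^n$ are supported on products of $n$ elements of $S$ rather than of $S\cup S^{-1}$; this only shrinks the support and does not affect your count.
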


\begin{proof}
Fix $n$, and let $\mathcal{S}^n$ be the set of all products $S_1S_2\ldots S_n$ of length
$n$ of elements of $\mathcal{S}$. Then $V^n$ is the linear span of the characteristic functions of elements
of $\mathcal{S}^n$. Denote, for $x\in\G^{(0)}$,
\[A_x=\bigcap_{F\in\mathcal{S}^n, x\in\be(F)}\be(F)\setminus\bigcup_{F\in\mathcal{S}^n, x\notin\be(F)}\be(F).\]
Since $\be(F)$ is clopen for every $F\in\mathcal{S}^n$, the sets $A_x$ are also clopen.
Note that for every $F\in\mathcal{S}^n$ and $x\in\G^{(0)}$, either
$A_x\subset\be(F)$, or $A_x\cap\be(F)=\emptyset$. 

If $F_1, F_2$ are open $\G$-bisections and
$F_1\cdot x=F_2\cdot x$ for a unit $x$, then the set of points $y$
such that $F_1\cdot y=F_2\cdot y$ is equal to the intersection of
$F_1^{-1}F_2$ with $\G^{(0)}$.
Since $\G$ is \'etale, this set is open.
Denote by $B_x$ the set of all points $y\in A_x$ such that $F_1\cdot
x=F_2\cdot x$ implies $F_1\cdot y=F_2\cdot y$ for all $F_1,
F_2\in\mathcal{S}^n$. Then $B_x$ is open and $x\in B_x$.

Note that if $x\sim_n y$, then $A_x=A_y$, as belonging of a point $y$ to the domain of a product $S_1S_2\ldots S_n$ of elements of $\mathcal{S}$
is equivalent to the existence of a path in $\G(y, \mathcal{S})$ of length $n$ starting at $y$ and labeled by the sequence $S_n, S_{n-1}, \ldots, S_1$.
Similarly, if $x\sim_n y$, then $B_x=B_y$, since an equality $F_1\cdot x=F_2\cdot x$ is equivalent to coincidence of endpoints of the paths corresponding
to the products $F_1$ and $F_2$ starting at $x$.

Let $\mathcal{B}=\{B_x\;:\;x\in\G^{(0}\}$. Since $B_x=B_y$ for $x\sim_n y$, the set $\mathcal{B}$ consists of at most
$\delta(n, \mathcal{S})$ elements.

\begin{lemma}
There exists a covering
$\wt{\mathcal{B}}=\{\wt B\}_{B\in\mathcal{B}}$ of $\G^{(0)}$ by
disjoint clopen sets
such that $\wt B\subset B$ for every $B\in\mathcal{B}$.
\end{lemma}

We allow some of the sets $\wt B$ to be empty.

\begin{proof}
By the Shrinking Lemma, we can find for every $B\in\mathcal{B}$ an
open set $B'\subset
B$ such that $\{B'\}_{B\in\mathcal{B}}$ is a covering of $\G^{(0)}$,
and closure of $B'$ is contained in $B$. Then closure of $B'$ is
compact, and can be covered by a finite collection of clopen subsets
of $B$. Hence, after replacing $B'$ by the union of these clopen
subsets, we may assume that $B'$ are clopen. Order the set
$\mathcal{B}$ into a sequence $B_1, B_2, \ldots, B_m$, define
$\wt B_1=B_1'$, and inductively, $\wt B_i=B_i'\setminus(B_1'\cup
B_2'\cup\cdots\cup B_{i-1}')$. Then $\{\wt B\}_{B\in\mathcal{B}}$
satisfies the conditions of the lemma.
\end{proof}

Let $x_1, x_2, \ldots, x_m$ be a transversal of the $\sim_n$ equivalence relation, where $m=\delta(n, \mathcal{S})$.
For every $F\in\mathcal{S}^n$ and $x_i\in\be(F)$, consider the
restriction $F\cdot \wt B_{x_i}$ of 
$F$ onto $\wt B_{x_i}$. Since $\{\wt B_{x_i}\}_{i=1, \ldots, m}$ is a
covering of $\G^{(0)}$ by disjoint subsets, the sets
$F\cdot\wt B_{x_i}$ form a covering of
$F$ by disjoint subsets, and $1_F=\sum_{i=1}^m1_{F\cdot \wt B_{x_i}}$.

If $F_1, F_2\in\mathcal{S}^n$ and $x_i$ are such that $x_i\in\be(F_1)\cap\be(F_2)$, and $F_1\cdot x_i=F_2\cdot x_i$, then for every
$y\in\wt B_{x_i}$ we have $y\in\be(F_1)\cap\be(F_2)$ and $F_1\cdot y=F_2\cdot y$, hence
$F_1\cdot\wt B_{x_i}=F_2\cdot\wt B_{x_i}$. It follows that $F\cdot\wt B_{x_i}$ depends only on $F\cdot x_i$, and we have not more than $\gamma(n, x_i, \mathcal{S})\le
\overline\gamma(n, \mathcal{S})$ non-empty sets of the form $F\cdot\wt B_{x_i}$, for every given $x_i$. Hence we have at most 
$\overline\gamma(n, \mathcal{S})\delta(n, \mathcal{S})$ functions of the form $1_{F\cdot x_i}$ in total, and every function $1_F$, for
$F\in\mathcal{S}^n$ is equal to the sum of a subset of these functions, which finishes the proof of the theorem.
\end{proof}

\subsection{Finite generation}
For a given finite set $\mathcal{S}$ of open compact $\G$-bisections, generating $\G$, denote
\[A_{x, n}=\bigcap_{F\in\mathcal{S}^n,
  x\in\be(F)}\be(F)\setminus\bigcup_{F\in\mathcal{S}^n,
  x\notin\be(F)}\be(F),\]
see the proof of Theorem~\ref{th:growth}.
Recall that the sets $A_{x, n}$ are clopen. It is also easy to see that two sets $A_{x, n}$ and $A_{y, n}$ are either disjoint
or coincide. Note also that $A_{x, n}\subset A_{x, m}$ if $n>m$. It follows that for any $x, y\in\G^{(0)}$ and $n>m$, either $A_{x, n}\subset A_{y, m}$,
or $A_{x, n}\cap A_{y, m}=\emptyset$.

\begin{defi}
We say that $\mathcal{S}$ is \emph{expansive}
if for any two different points $x, y\in\G^{(0)}$ there exists $n$ such that $A_{x, n}$ and $A_{y, n}$ are disjoint.
\end{defi}

\begin{proposition}
\label{prop:expansive}
If $\mathcal{S}$ is expansive, then the set $\{1_S\;:\;S\in\mathcal{S}\cup\mathcal{S}^{-1}\}$ generates $\Bbbk[\G]$.
\end{proposition}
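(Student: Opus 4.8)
The plan is to show that the subalgebra $\mathcal{A}$ generated by $\{1_S : S\in\mathcal{S}\cup\mathcal{S}^{-1}\}$ contains the characteristic function $1_U$ of every clopen subset $U\subset\G^{(0)}$, and then to deduce that $\mathcal{A}$ contains $1_F$ for every open compact $\G$-bisection $F$, which gives $\mathcal{A}=\Bbbk[\G]$. The main work is the first step. First I would observe that $\mathcal{A}$ already contains $1_{\G^{(0)}}=1_\varnothing\cdot 1$ (using that $\mathcal{S}$ generates, so some product of bisections is a neighborhood of a given unit; more simply, $1_S 1_{S^{-1}}=1_{SS^{-1}}=1_{\en(S)}$ and $1_{S^{-1}}1_S=1_{\be(S)}$ are in $\mathcal{A}$ for every $S\in\mathcal{S}$, and since $S=\bigcup\mathcal{S}$ generates $\G$, finite unions of such $\en(S),\be(S)$, intersected and complemented, already generate a fairly rich Boolean algebra). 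More systematically: for a product $F=S_1S_2\cdots S_n\in\mathcal{S}^n$ the element $1_{\be(F)}=1_{F^{-1}F}=1_{F^{-1}}1_F$ lies in $\mathcal{A}$, and $\mathcal{A}\cap\Bbbk[\G^{(0)}]$ is a subalgebra of the algebra of locally constant functions on $\G^{(0)}$ closed under pointwise operations, hence it is the algebra of $\Bbbk$-valued functions constant on the atoms of the Boolean algebra $\mathcal{P}$ generated by all the sets $\be(F)$, $F\in\mathcal{S}^n$, $n\ge 0$. The atoms of $\mathcal{P}$ are exactly the sets of the form $\bigcap_{n} A_{x,n}$ as $x$ ranges over $\G^{(0)}$: indeed $A_{x,n}$ is by definition the atom containing $x$ in the finite Boolean algebra generated by $\{\be(F):F\in\mathcal{S}^n\}$, and these refine as $n$ grows.

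Now the expansiveness hypothesis enters. I claim expansiveness forces each atom $\bigcap_n A_{x,n}$ to be a single point. Suppose $x\ne y$ both belong to $\bigcap_n A_{z,n}$ for some $z$; then for every $n$ the atoms $A_{x,n}$ and $A_{y,n}$ both equal $A_{z,n}$, hence are never disjoint, contradicting expansiveness. So every atom of $\mathcal{P}$ is a singleton. Since $\G^{(0)}$ is compact and the $A_{x,n}$ are clopen and nested, a standard compactness argument shows that for any clopen $U\subset\G^{(0)}$ and any $x\in U$ there is $n$ with $A_{x,n}\subset U$ (otherwise the closed nested family $\{A_{x,n}\setminus U\}_n$ of nonempty compact sets would have nonempty intersection, giving a point of the atom $\bigcap_n A_{x,n}$ outside $U$, but that atom is $\{x\}\subset U$). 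Covering the compact set $U$ by finitely many such $A_{x_i,n_i}$ and disjointifying (each $A_{x,n}$ is clopen and any two are nested or disjoint), we write $1_U$ as a finite $\Bbbk$-linear combination — in fact a sum — of characteristic functions $1_{A_{x_i,n_i}}$, each of which lies in $\mathcal{A}$. Hence $1_U\in\mathcal{A}$ for every clopen $U\subset\G^{(0)}$.

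Finally I would bootstrap from units to bisections. Let $F$ be any open compact $\G$-bisection; I must show $1_F\in\mathcal{A}$. Cover $F$ by finitely many open compact bisections each contained in some product $G_1G_2\cdots G_k\in(\mathcal{S}\cup\mathcal{S}^{-1})^k$ — this is possible because $\mathcal{S}\cup\mathcal{S}^{-1}$ generates $\G$ and the open bisections contained in such products form a basis of the topology of $\G$ (using that $\G$ is étale and $S$ generating). Disjointifying this finite cover and using that a restriction of a bisection to a clopen subset of its base is again obtained by multiplying by the corresponding $1_U$, it suffices to treat $F\subset G_1\cdots G_k$ with the $G_i\in\mathcal{S}\cup\mathcal{S}^{-1}$. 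For such $F$ there is a clopen $U\subset\be(G_1\cdots G_k)$ with $F=(G_1\cdots G_k)\cdot U$, and then $1_F=1_{G_1}1_{G_2}\cdots 1_{G_k}1_U$, which lies in $\mathcal{A}$ because each $1_{G_i}\in\mathcal{A}$ and $1_U\in\mathcal{A}$ by the previous paragraph. Since $\Bbbk[\G]$ is spanned by such $1_F$, we conclude $\mathcal{A}=\Bbbk[\G]$.

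The step I expect to be the main obstacle is the reduction in the last paragraph: passing from "$\mathcal{A}$ contains all $1_U$ for clopen $U\subset\G^{(0)}$" to "$\mathcal{A}$ contains all $1_F$." One must be careful that an arbitrary open compact bisection need not itself be of the form (product of $\mathcal{S}$-elements)$\cdot(\text{clopen set of units})$, only a finite disjoint union of restrictions of such, and that the restriction operation is compatible with the algebra structure via right multiplication by $1_U$; the étale hypothesis and compactness of $F$ are what make the covering finite and the pieces manageable.
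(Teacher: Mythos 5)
Your argument is correct and follows essentially the same route as the paper: you first get $1_{A_{x,n}}$ into the subalgebra via Boolean operations, use expansiveness together with compactness to show the sets $A_{x,n}$ shrink inside any clopen neighborhood of $x$, and then write an arbitrary open compact bisection as a disjoint union of restrictions of products of generators to small clopen subsets of $\G^{(0)}$, exactly the paper's scheme. The points where you deviate (the singleton-atom/nested-intersection form of the compactness step instead of covering the complement by sets $A_{y,n_y}$, and covering $F$ by basis bisections contained in products rather than matching $F$ with a product on the open locus where they agree) are only cosmetic reformulations of the same facts.
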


\begin{proof}
Let $\mathcal{A}$ be the algebra generated by the functions $1_S$ for $S\in\mathcal{S}\cup\mathcal{S}^{-1}$.
Note that $\be(F)=F^{-1}F$, hence $1_F\in A$ for every $F\in(\mathcal{S}\cup\mathcal{S}^{-1})^n$. Note also that $1_{A\cap B}=1_A\cdot 1_B$,
$1_{A\setminus B}=1_A\cdot(1_A-1_B)$, and $1_{A\cup B}=1_A+1_B-1_A1_B$ for every $A, B\subset\G^{(0)}$. It follows that
$1_{A_{x, n}}\in \mathcal{A}$ for all $x\in\G^{(0)}$ and $n$.

Let us show that for every open set $A\subset\G^{(0)}$ and every $x\in A$ there exists $n$ such that $A_{x, n}\subset A$. For every $y\notin A$
there exists $n_y$ such that $A_{x, n_y}\cap A_{y, n_y}=\emptyset$. Since $\G^{(0)}\setminus A$ is compact, there exists a finite
covering $A_{y_1, n_{y_1}}, A_{y_2, n_{y_2}}, \ldots, A_{y_m, n_{y_m}}$ of $\G^{(0)}\setminus A$. Let $n=\max n_{y_i}$. Then $A_{x, n}\subset A$.

Let $F$ be an arbitrary open compact $\G$-bisection. For every $g\in F$ there exists $n$ and $F'\in(\mathcal{S}\cup\mathcal{S}^{-1})^n$
such that $g\in F'$. There also exists $n_g$ such that $A_{\be(g), n_g}\subset\be(F)$ and $F\cdot A_{\be(g), n_g}=F'\cdot A_{\be(g), n_g}$.
We get a covering of $F$ by sets of the form $F'\cdot A_{x, m}$, where $F'\in(\mathcal{S}\cup\mathcal{S}^{-1})^n$. Since any two
sets of the form $A_{x, n}$ are either disjoint or one is a subset of the other, we can find a covering of $F$ by disjoint sets of the form
$F'\cdot A_{x, m}$ for $F'\in(\mathcal{S}\cup\mathcal{S}^{-1})^n$. This implies that $1_F\in\mathcal{A}$, which finishes the proof.
\end{proof}

\subsection{Examples}

\subsubsection{Subshifts}
\label{sss:subshifts}
Let $\X\subset X^{\Z}$ be a subshift, and let
$\mathfrak{S}$ be the groupoid of germs generated by the shift
$s:\X\arr\X$. Let, as in~\ref{sss:shifts}, $S_x=\{(s, w)\;:\;w(0)=x\}$,
$\mathcal{S}=\{S_x\}_{x\in X}$. Note that
for every word $x_1x_2\ldots x_n$ domain of the product
$S_{x_1}S_{x_2}\cdots S_{x_n}$ is the set of words $w\in\X$ such that
$w(0)=x_n$, $w(1)=x_{n-1}$, \ldots, $w(n-1)=x_1$. It follows that the
set $\mathcal{S}\cup\mathcal{S}^{-1}$ is expansive, and by
Proposition~\ref{prop:expansive},
$\{1_S\}_{S\in\mathcal{S}\cup\mathcal{S}^{-1}}$ is a generating set of
$\Bbbk[\mathfrak{S}]$. 

Since $\mathfrak{S}$ coincides with the groupoid of the
$\Z$-action on $\X$ defined by the shift, the algebra $\Bbbk[\mathfrak{S}]$ is
the corresponding cross-product of the algebra of continuous $\Bbbk$-valued
functions with the group algebra of $\Z$. Every its element is
uniquely written as a Laurent polynomial $\sum a_n\cdot t^n$, where $t\in\Bbbk[\G]$ is
the characteristic function of the set of germs of the shift $s:\X\arr\X$, and
$a_n$ are continuous $\Bbbk$-valued functions. Multiplication rule for such
polynomials follows from the relations $t\cdot a=b\cdot t$, where
$a, b:\X\arr\Bbbk$ satisfy $b(w)=a(s^{-1}(w))$ for every $w\in\X$.

\begin{proposition}
\label{pr:shiftgrowth}
Let $V$ be the linear span of
$\{1\}\cup\{1_S\}_{S\in\mathcal{S}\cup\mathcal{S}^{-1}}$. Then
\[\left\lfloor\frac{n}2\right\rfloor
p_{\X}\left(\left\lfloor\frac n2\right\rfloor\right)\le\dim V^n\le (2n+1)p_{\X}(2n).\]
\end{proposition}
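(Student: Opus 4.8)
The plan is to establish the two inequalities separately, using the explicit description of $\Bbbk[\mathfrak{S}]$ as the cross-product $\bigoplus_n a_n t^n$ with $a_n$ a locally constant function on $\X$, together with the identification $\delta(n,\mathcal{S}) = p_{\X}(2n)$ already recorded in \ref{sss:shifts}. For the upper bound $\dim V^n \le (2n+1)p_{\X}(2n)$, I would argue that every element of $V^n$ is a Laurent polynomial $\sum_{|k|\le n} a_k t^k$ whose coefficients $a_k$ are functions that are constant on each cylinder determined by the coordinates $w(-n),\dots,w(n)$ — indeed, a length-$n$ product of generators from $\mathcal{S}\cup\mathcal{S}^{-1}$ has the form $1_U t^k$ where $U$ is such a cylinder and $|k|\le n$, and products of these stay in this shape because $t\cdot 1_U = 1_{s(U)}\cdot t$ only shifts the defining coordinate window. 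The number of such cylinders $U$ (equivalently, the number of admissible words of length $2n+1$ appearing in $\X$) is $p_{\X}(2n+1)$, but grouping by the starting position one gets the cruder bound: there are $2n+1$ possible exponents $k$ and, for each, the coefficient $a_k$ lies in the span of characteristic functions of cylinders of width $2n$, of which there are $p_{\X}(2n)$. Hence $\dim V^n \le (2n+1)\,p_{\X}(2n)$. (One could alternatively just invoke Theorem~\ref{th:growth} with $\overline\gamma(n,\mathcal{S}) = 2n+1$ and $\delta(n,\mathcal{S}) = p_{\X}(2n)$, which gives this immediately; I would mention that route as it is cleanest.)

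For the lower bound, the point is to exhibit $\lfloor n/2\rfloor\cdot p_{\X}(\lfloor n/2\rfloor)$ linearly independent elements inside $V^n$. Set $m = \lfloor n/2\rfloor$. For each admissible word $u = (u_0,\dots,u_{m-1})$ of length $m$ in $\X$, the product $S_{u_0}S_{u_1}\cdots S_{u_{m-1}}$ is, by the computation in \ref{sss:subshifts}, the characteristic function of the set of germs $\{(s^m, w) : w(0)=u_{m-1}, \dots, w(m-1)=u_0\}$; call its characteristic function $\sigma_u = 1_{C_u}\, t^m$ where $C_u$ is the corresponding cylinder. These lie in $V^m \subset V^n$, and since the cylinders $C_u$ are pairwise disjoint and nonempty, the functions $\sigma_u$ are linearly independent — giving $p_{\X}(m)$ of them. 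To get the extra factor of $m$, multiply each $\sigma_u$ by $t^j$ for $j = 0, 1, \dots, m-1$: the elements $t^j \sigma_u = 1_{s^j(C_u)}\, t^{m+j}$ lie in $V^{m+j} \subseteq V^{2m} \subseteq V^n$, and for fixed $j$ they are indexed by disjoint cylinders hence independent, while for different $j$ they have different $t$-exponents $m+j$ and so live in different graded components of the cross-product. Therefore all $m\cdot p_{\X}(m)$ elements $\{t^j\sigma_u\}$ are linearly independent, yielding $\dim V^n \ge m\, p_{\X}(m) = \lfloor n/2\rfloor\, p_{\X}(\lfloor n/2\rfloor)$.

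The main obstacle, and the place requiring care, is the lower bound bookkeeping: one must check that the chosen products $t^j\sigma_u$ genuinely have word length $\le n$ in the generating set $\{1\}\cup\{1_S\}_{S\in\mathcal{S}\cup\mathcal{S}^{-1}}$ (hence the choice $m = \lfloor n/2\rfloor$, so that $m + (m-1) < 2m \le n$), and that linear independence survives across the whole family — which it does because the $\Z$-grading of the cross-product separates distinct exponents and disjointness of cylinders separates same-exponent terms, so no cancellation between the grading components is possible. The upper bound is essentially a direct unwinding of the shape of elements of $V^n$ in the cross-product (or a one-line appeal to Theorem~\ref{th:growth}) and presents no real difficulty; the factor $2n+1$ rather than $n+1$ accounts for exponents ranging over $\{-n,\dots,n\}$, and the width $2n$ of the coefficient cylinders reflects that a depth-$n$ product can constrain coordinates $w(-n+1),\dots,w(n)$ (or a shift thereof).
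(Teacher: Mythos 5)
Your proposal is correct and takes essentially the paper's route: the upper bound is exactly the paper's (an appeal to Theorem~\ref{th:growth}, with $\overline\gamma(n,\mathcal{S})=2n+1$ and $\delta(n,\mathcal{S})=p_{\X}(2n)$), and the lower bound rests on the same key observation that characteristic functions of nonzero products of the $S_x$ are linearly independent because their supports are disjoint (and distinct powers of the shift are separated). The only difference is bookkeeping: the paper counts all nonzero products of length at most $n$, getting $\sum_{k\le n}p_{\X}(k)$ independent elements and then invokes monotonicity of $p_{\X}$, whereas you fix length $m=\lfloor n/2\rfloor$ and pad with powers of $t$; both yield the stated bound.
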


\begin{proof}
The upper bound follows from Theorem~\ref{th:growth}. For the lower
bound note that $S_{x_1}S_{x_2}\ldots S_{x_n}$ and
$S_{y_1}S_{y_2}\ldots S_{y_m}$ are disjoint if $x_1x_2\ldots x_n\ne
y_1y_2\ldots y_m$, hence the set of characteristic functions of all
non-zero products of elements of $\mathcal{S}$ is linearly
independent, so that $\sum_{k=0}^np_{\X}(n)\le\dim V^n$. Since
$p_{\X}(n)$ is non-decreasing, we have
$\left\lfloor\frac{n}2\right\rfloor
p_{\X}\left(\left\lfloor\frac n2\right\rfloor\right)\le\sum_{k=0}^np_{\X}(n)$.
\end{proof}

Note that since the characteristic functions of the
products $S_{x_1}S_{x_2}\ldots S_{x_n}$ are linearly independent,
their linear span is a sub-algebra of $\Bbbk[\mathfrak{S}]$ isomorphic
to the semigroup algebra $\mathcal{M}_{\X}$ 
of the semigroup generated by the set
$\{S_x\;:\;x\in X\}$. It is easy to see that $\mathcal{M}_{\X}$ is
isomorphic to the quotient of the free associative algebra generated
by $X$ modulo the ideal generated by all words $w\in X^*$ such that
$w$ is not a subword of any element of the subshift $\X$. It follows
from Proposition~\ref{pr:shiftgrowth} that growths of
$\Bbbk[\mathfrak{S}]$ and $\mathcal{M}_{\X}$ are equivalent. Note that
the algebras $\mathcal{M}_{\X}$ are the original examples of algebras
of arbitrary Gelfand-Kirillov dimension, see~\cite{warfield:gk}
and~\cite[Theorem~2.9]{krauselenagan}.

\begin{examp}
Let $\X$ be a Sturmian subshift. It is minimal
and $p_{\X}(n)=n+1$, hence \[\frac{(n+1)(n+2)}2\le\dim
V^n\le 2n(2n+1),\]
so that $\Bbbk[\mathfrak{S}]$ is a quadratically growing finitely
generated algebra. Note that it is simple by Proposition~\ref{pr:simple}. This disproves
Conjecture~3.1 in~\cite{bell:simple}.
\end{examp}

\begin{examp}
It is easy to see that every Toeplitz subshift is
minimal. Consequently, known examples of Toeplitz subshifts
(see Subsection~\ref{sss:shifts}) provide us with simple finitely
generated algebras of arbitrary Gelfand-Kirillov dimension $\alpha\ge
2$, and also uncountably many different growth types of simple
finitely generated algebras of
Gelfand-Kirillov dimension two (see a question on existence of such
algebras on page 832 of~\cite{bell:dichotomy}).
\end{examp}

\subsubsection{Self-similar groups}

Let $G$ be a self-similar group of automorphisms of the tree
$X^*$. Let $\G$ be the groupoid of germs of its action on the boundary
$X^{\N}$ of the tree. Suppose that $G$ is \emph{self-replicating},
i.e., for all $x, y\in X$ and $g\in G$ there exists $h\in G$ such that
$g(x)=y$ and $h|_x=g$. Then for all pairs of words $v, u\in X^*$ of equal
length and every $g\in G$ there exists $h\in G$ such that $h(v)=u$ and
$h|_v=g$. In other words, the transformation $vw\mapsto ug(w)$ is an
open compact $\G$-bisection (more pedantically, the set of its germs
is a bisection, but we will identify a $\G$ bisection $F$ with the map
$\be(g)\mapsto\en(g)$, $g\in F$).

Fix $n\ge 0$, and consider the set of all $\G$-bisections of the form
$R_{u, g, v}:vw\mapsto ug(w)$ for $v, u\in X^n$ and $g\in G$. Note
that these bisections are multiplied by the rule
\begin{equation}\label{eq:Rugv}
R_{u_1, g_1, v_1}R_{u_2, g_2, v_2}=\left\{\begin{array}{ll} 0 &
    \text{if $v_1\ne u_2$;}\\ R_{u_1, g_1g_2, v_2} & \text{if $v_1=u_2$}.\end{array}\right.
\end{equation}
Let $A_n$ be the formal linear span of the elements $R_{u, g, v}$ for $u, v\in X^n$ and $g\in G$. Extend
multiplication rule~\eqref{eq:Rugv} to $A_n$. It is easy to see then that $A_n$ is isomorphic to the algebra
$M_{d^n\times d^n}(\Bbbk[G])$ of matrices of size $d^n\times d^n$ over the group ring $\Bbbk[G]$.

The map $R_{u, g, v}\mapsto\sum_{x\in X} R_{ug(x), g|_x, vx}$ induces a homomorphism $A_n\mapsto A_{n+1}$ called
the \emph{matrix recursion}. More on matrix recursions for
self-similar groups
see~\cite{bgr:spec,bartholdi:ring,nek:bim,nek:cpalg,grinek:schur}.

\begin{examp}
For the adding machine action (see Example~\ref{ex:admach}) the matrix recursions replace every entry $a^n$ by $\left(\begin{array}{cc} 0 & a\\
1 & 0\end{array}\right)^n$, i.e., are induced by the map
\[a\mapsto\left(\begin{array}{cc} 0 & a\\ 1 & 0\end{array}\right).\] For
example, the image of $a$ in $A_2$ is 
\[\left(\begin{array}{cccc}0 & 0 & 0 & a\\ 0 & 0 & 1 & 0\\ 1 & 0 & 0 &
    0\\ 0 & 1 & 0 & 0\end{array}\right).\]

For the Grigorchuk group the matrix recursions are induced by the map 
\[a\mapsto\left(\begin{array}{cc} 0 & 1\\ 1 & 0\end{array}\right),\qquad 
b\mapsto\left(\begin{array}{cc} a & 0\\ 0 & c\end{array}\right),\] 
\[c\mapsto\left(\begin{array}{cc} a & 0\\ 0 & d\end{array}\right),\qquad
d\mapsto\left(\begin{array}{cc} 1 & 0\\ 0 & b\end{array}\right).\]
\end{examp}

\begin{proposition}
The convolution algebra $\Bbbk[\G]$ of the groupoid of germs of the action of $G$ on $X^{\N}$ is isomorphic to the direct
limit of the matrix algebras $A_n\cong M_{d^n\times d^n}(\Bbbk[G])$ with respect to the matrix recursions.
\end{proposition}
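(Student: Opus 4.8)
The plan is to realize the matrix algebras $A_n$ as a nested family of subalgebras of $\Bbbk[\G]$ exhausting it, and then to identify the direct limit. Throughout, a $\G$‑bisection is identified with the corresponding map of unit spaces, as in the text.

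\emph{The maps.} The self‑replicating hypothesis makes each $R_{u,g,v}$ with $u,v\in X^n$, $g\in G$ a genuine open compact $\G$‑bisection, namely the germ set $\{(h,vw):w\in X^{\N}\}$ for any $h$ with $h(v)=u$ and $h|_v=g$; it runs from the cylinder $vX^{\N}$ to $uX^{\N}$. Define $\psi_n\colon A_n\to\Bbbk[\G]$ on the free module $A_n$ by $R_{u,g,v}\mapsto 1_{R_{u,g,v}}$. Since $1_{F_1}1_{F_2}=1_{F_1F_2}$ and the convolution product of these particular bisections obeys exactly rule~\eqref{eq:Rugv}, $\psi_n$ is an algebra homomorphism. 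Splitting $vX^{\N}$ according to the first letter exhibits the germ set of $R_{u,g,v}$ as the disjoint union $\bigsqcup_{x\in X}R_{ug(x),g|_x,vx}$, so $1_{R_{u,g,v}}=\sum_{x\in X}1_{R_{ug(x),g|_x,vx}}$; that is, $\psi_{n+1}$ composed with the matrix recursion equals $\psi_n$. Hence the $\psi_n$ assemble into $\psi_\infty\colon\varinjlim A_n\to\Bbbk[\G]$, and it suffices to prove $\psi_\infty$ is a bijection.

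\emph{Surjectivity.} $\Bbbk[\G]$ is spanned by the functions $1_F$ for $F$ an open compact $\G$‑bisection. The topology of the groupoid of germs has a basis of ``elementary'' bisections $\{(g,w):w\in vX^{\N}\}$; by compactness $F$ is a finite union of these, and, since $\be$ is injective on $F$, one may refine this to a finite \emph{disjoint} union of elementary bisections with cylinders of one common depth $n$. Each such summand equals $R_{g(v),g|_v,v}\in A_n$, so $1_F\in\psi_n(A_n)\subseteq\operatorname{Im}\psi_\infty$.

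\emph{Reducing injectivity.} As $\psi_n(R_{u,g,v})$ is supported on the part of $\G$ with origin in $vX^{\N}$ and target in $uX^{\N}$, matrix units attached to distinct pairs $(u,v)\in X^n\times X^n$ go to functions with disjoint supports; consequently $\ker\psi_n$ is a two‑sided ideal of $A_n\cong M_{d^n}(\Bbbk[G])$, hence of the form $M_{d^n}(\ker\beta)$, where $\beta\colon\Bbbk[G]\to\Bbbk[\G]$ sends $g$ to the characteristic function of $\{(g,w):w\in X^{\N}\}$. Identifying the kernel's ``corner'' uses that $1_{vX^{\N}}\Bbbk[\G]1_{vX^{\N}}$ is isomorphic to $\Bbbk[\G]$ through the groupoid isomorphism $\G|_{vX^{\N}}\to\G$, $(h,vw)\mapsto(h|_v,w)$ (surjective by self‑replication, and a homeomorphism by a routine check), which carries $\beta(r)$ to $\psi_n(R_{v,r,v})$. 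Thus $\psi_\infty$ is injective once $\beta$ is.

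\emph{The main point: $\beta$ is injective.} One must show that for pairwise distinct $g_1,\dots,g_m\in G$ the characteristic functions of the bisections $\{(g_i,w):w\in X^{\N}\}$ are linearly independent. The naive strategy — pick a point $w$ at which the germs $(g_i,w)$ are pairwise distinct — can fail, since a faithful action (e.g.\ the Grigorchuk group) may have finite families no point of which separates. Instead, take a relation $\sum_{g}c_g\,1_{\{(g,w):w\in X^{\N}\}}=0$ with support $F=\{g:c_g\neq0\}$ of minimal cardinality, and for each $w$ let $\pi(w)$ be the partition of $F$ into classes of elements having the same germ at $w$; evaluating the relation at a germ over $w$ forces $\sum_{g\in B}c_g=0$ for every block $B$ of $\pi(w)$. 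The assignment $w\mapsto\pi(w)$ is locally constant on a cover of $X^{\N}$ by cylinders and on each cylinder attains a locally finest value; combining the relations coming from the (in general pairwise incomparable) finest partitions over such a cover, and invoking faithfulness of the $G$‑action to rule out a block that persists over all of $X^{\N}$, one obtains all $c_g=0$, contradicting minimality. This gives $\ker\beta=0$, and with the previous steps that $\psi_\infty$ is an isomorphism. The whole difficulty of the argument is concentrated in this last step, namely in controlling how the partitions $\pi(w)$ vary across the space.
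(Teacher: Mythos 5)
Your construction of the maps $\psi_n$, their compatibility with the matrix recursion, and the surjectivity argument are correct and essentially identical to the paper's. The injectivity step, however, contains a fatal flaw: you reduce injectivity of $\psi_\infty$ to injectivity of $\beta:\Bbbk[G]\arr\Bbbk[\G]$, $g\mapsto 1_{\{(g,w)\,:\,w\in X^{\N}\}}$, and then sketch a proof that $\beta$ is injective. That claim is false in general. For the Grigorchuk group (which is self-replicating, hence within the hypotheses) the image $\beta(\Bbbk[G])$ is the thinned algebra; by Proposition~\ref{pr:contractingestimates} combined with Theorem~\ref{th:growth} it is a finitely generated subalgebra of $\Bbbk[\G]$ of Gelfand--Kirillov dimension at most $2$ over any field, whereas $\Bbbk[G]$ itself has super-polynomial growth because the Grigorchuk group has intermediate growth. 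Hence $\ker\beta\neq 0$ over every field, the maps $\psi_n$ are not injective, and no bookkeeping of your partitions $\pi(w)$ can rescue the final step (your own aside about the Grigorchuk group shows you sensed the difficulty, but that group is in fact an outright counterexample to the statement you set out to prove, not merely an obstacle to the naive separation argument).

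What is true, and what the paper's proof establishes, is weaker but sufficient: an element of $\ker\psi_n$ need not vanish in $A_n$, but it becomes zero at some finite later stage $A_m$, hence is zero in the direct limit. Equivalently, if $a\in A_n$ has nonzero image in every $A_m$, $m\ge n$, then the function $\psi_n(a)$ is nonzero at some germ: one chooses a suitable germ $(g,w)$ and refines the representation of $a$ to a sufficiently deep level (the level $|w_1|$ in the paper's argument) so that exactly one of the basis bisections $R_{u,h,v}$ occurring there contains $(g,w)$; the value of the function at $(g,w)$ is then a single nonzero coefficient. The essential content of injectivity is thus an interplay between germs and finite levels of the tree (a compactness/K\"onig-type argument over levels), and it cannot be localized at a single finite stage $A_n$, which is exactly what your reduction to injectivity of $\beta$ attempts to do.
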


\begin{proof}
Denote by $A_\infty$ the direct limit of the algebras $A_n$ with respect to the matrix recursions. Let $\phi:A_\infty\arr\Bbbk[\G]$ be
the natural map given by $\phi(R_{u, g, v})=1_{R_{u, g, v}}$. Note 
that $1_{R_{u, g, v}}=\sum_{x\in X}1_{R_{ug(x), g|_x, vx}}$, hence
the map $\phi$ is well defined. It also follows from equation~\eqref{eq:Rugv} that $\phi$ is a homomorphism of algebras. It remains to show that
$\phi$ is injective. Let $f$ be a non-zero element of $\Bbbk[\G]$, and let $(g, w)\in\G$ be such that $f(g, w)\ne 0$. Suppose that
$\phi(f)=\sum_{u, v\in X^n}\alpha_{u, v}R_{u, g_{u, v}, v}$ for some $\alpha_{u, v}\in\Bbbk$ and $g_{u, v}\in G$.
Denote the set
of all pairs $(u, v)$ such that $(g, w)\in R_{u, g_{u, v}, v}$ and $\alpha_{u, v}\ne 0$ by $P$. 
The set $\bigcap_{(u, v)\in P}R_{u, g_{u, v}, v}$ is an open neighborhood of $(g, w)$, hence there exists a $\G$-bisection
$R_{w_1, h, w_2}$ contained in $\bigcap_{(u, v)\in P}R_{u, g_{u, v}, v}$. Applying the matrix recursion, we get a representation 
of $f$ as an element $\sum_{u, v\in X^{|w_1|}}\beta_{u, v}R_{u, h_{u, v}, v}\in A_{|w_1|}$ such that $(g, w)$ does not belong to 
any set $R_{u, h_{u, v}, v}$, $u, v\in X^{|w_1|}$, $(u, v)\ne (w_1, w_2)$. Then $f(g, w)=\beta_{u, v}\ne 0$, hence $\phi(f)\ne 0$.
\end{proof}

As a corollary of Proposition~\ref{pr:contractingestimates}
and Theorem~\ref{th:growth} we get the following result of L.~Bartholdi~\cite{bartholdi:ring}.

\begin{proposition}
Let $G$ be a contracting self-replicating group, and let $\G$ be the groupoid of germs
of its action on $X^{\N}$. Every finitely generated sub-algebra of $\Bbbk[\G]$ has Gelfand-Kirillov dimension at most
$\frac{2\log|X|}{-\log\lambda}$, where $\lambda$ is the contraction coefficient of $G$.
\end{proposition}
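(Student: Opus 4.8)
The plan is to combine Proposition~\ref{pr:contractingestimates}, which estimates $\overline\gamma(n,\mathcal{S})$ and $\delta(n,\mathcal{S})$ for the groupoid of germs of a contracting self-similar group, with Theorem~\ref{th:growth}, which bounds $\dim V^n$ for a finitely generated subspace $V\subset\Bbbk[\G]$ in terms of these two functions. The subtlety is that the statement concerns an \emph{arbitrary} finitely generated subalgebra $\mathcal{B}\subset\Bbbk[\G]$, not just the subalgebra generated by a single prescribed generating set $\{1_S:S\in\mathcal{S}\}$; so first I would reduce to the latter case.

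\textbf{Step 1: Reduction to a standard generating set.} Let $\mathcal{B}$ be a finitely generated subalgebra of $\Bbbk[\G]$, spanned (as an algebra) by finitely many elements $f_1,\dots,f_k$. Each $f_i$ is a finite $\Bbbk$-linear combination of characteristic functions of open compact $\G$-bisections, and every open compact $\G$-bisection is covered by finitely many bisections of the form $R_{u,g,v}$ with $u,v\in X^m$, $g\in G$ (for $m$ large enough, using self-replication and that such sets form a basis). Hence, enlarging the generating set, we may assume $\mathcal{B}\subset\mathcal{A}$, where $\mathcal{A}$ is the subalgebra of $\Bbbk[\G]$ generated by the finite set $\mathcal{S}$ of bisections $\{(s,w):w\in X^{\N}\}$, $s\in S$, for a finite generating set $S$ of $G$, together with a finite collection of $R_{u,g,v}$'s. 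Since Gelfand–Kirillov dimension is monotone under inclusion of subalgebras, it suffices to bound $\operatorname{GKdim}(\mathcal{A})$, i.e. to bound $\dim V^n$ where $V$ is the $\Bbbk$-span of $\{1\}$ together with the finitely many chosen generators, all of which are characteristic functions of open compact $\G$-bisections. After one more enlargement (replacing each generator by the finitely many bisections in a covering of it), $V$ is the span of $\{1_F : F\in\mathcal{S}'\}$ for a finite set $\mathcal{S}'$ of open compact $\G$-bisections whose union generates $\G$.

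\textbf{Step 2: Apply Theorem~\ref{th:growth} and Proposition~\ref{pr:contractingestimates}.} By Theorem~\ref{th:growth}, $\dim V^n\le\overline\gamma(n,\mathcal{S}')\,\delta(n,\mathcal{S}')$. The functions $\overline\gamma$ and $\delta$ are, up to the usual rescaling $n\mapsto Cn$, independent of the choice of generating set, so their polynomial growth rates agree with those computed in Proposition~\ref{pr:contractingestimates} for the standard generating set $\mathcal{S}$. That proposition gives
\[
\limsup_{n\to\infty}\frac{\log\overline\gamma(n,\mathcal{S})}{\log n}\le\frac{\log|X|}{-\log\lambda},\qquad
\limsup_{n\to\infty}\frac{\log\delta(n,\mathcal{S})}{\log n}\le\frac{\log|X|}{-\log\lambda}.
\]
Therefore $\limsup_{n\to\infty}\frac{\log\dim V^n}{\log n}\le\limsup_{n\to\infty}\frac{\log\overline\gamma(n,\mathcal{S})}{\log n}+\limsup_{n\to\infty}\frac{\log\delta(n,\mathcal{S})}{\log n}\le\frac{2\log|X|}{-\log\lambda}$, which is exactly $\operatorname{GKdim}(\mathcal{A})\le\frac{2\log|X|}{-\log\lambda}$, and hence $\operatorname{GKdim}(\mathcal{B})\le\frac{2\log|X|}{-\log\lambda}$.

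\textbf{Main obstacle.} The genuinely substantive point is Step~1: making sure that an arbitrary finitely generated subalgebra is contained in one whose generators are \emph{characteristic functions of open compact bisections} forming a finite generating covering, so that Theorem~\ref{th:growth} applies with the \emph{same} asymptotic $\overline\gamma,\delta$ as in Proposition~\ref{pr:contractingestimates}. This rests on two facts proved earlier: that the $R_{u,g,v}$ (equivalently, the sets occurring in the matrix recursion picture) form a basis of the topology of $\G$, and that the growth rates of $\overline\gamma$ and $\delta$ do not depend on the finite generating set. Once these are in hand, the rest is a routine combination of the sub-additivity of $\limsup$ of logarithms under products with the two estimates, together with monotonicity of Gelfand–Kirillov dimension under subalgebras.
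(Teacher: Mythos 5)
Your plan is the derivation the paper intends (it states the proposition as a direct corollary of Proposition~\ref{pr:contractingestimates} and Theorem~\ref{th:growth}), and your Step~1 reduction to a subalgebra generated by characteristic functions of finitely many basis bisections $R_{u,g,v}$ is correct and necessary, since an arbitrary finitely generated subalgebra need not sit inside the thinned algebra. However, Step~2 has a genuine gap: the claim that $\delta$ is, up to the rescaling $n\mapsto Cn$, independent of the choice of generating data is false, and the paper never asserts it (it proves such independence only for $\overline\gamma$). The complexity function depends on the chosen finite \emph{covering} $\mathcal{S}$ by bisections, not just on $S=\bigcup\mathcal{S}$, and its growth type can change drastically: for a minimal aperiodic subshift $\X$ with exponential complexity, take $\mathcal{S}_1=\{S\}$ with $S$ the single bisection of all germs of the shift; every ball $B_{\mathcal{S}_1}(w,r)$ is a directed segment of length $2r$ with all arrows carrying the same label, so $\delta(r,\mathcal{S}_1)=1$, whereas the covering $\mathcal{S}_2=\{S_x\}_{x\in X}$ of the very same set $S$ gives $\delta(r,\mathcal{S}_2)=p_{\X}(2r)$, which grows exponentially. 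So no inequality of the form $\delta(r,\mathcal{S}')\le C'\delta(Cr,\mathcal{S})$ is available in general, and you cannot transfer the bound of Proposition~\ref{pr:contractingestimates} to your enlarged set $\mathcal{S}'$ by a change-of-generators argument; the obstruction is that an element of $\mathcal{S}'$ is only \emph{piecewise} a product of elements of $\mathcal{S}$, and the clopen partition into pieces is extra data not visible in the $\mathcal{S}$-labeled balls.

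The repair is to prove the two estimates directly for your $\mathcal{S}'$, by rerunning the proof of Proposition~\ref{pr:contractingestimates}. After Step~1 you may assume $\mathcal{S}'$ consists of bisections $R_{u,g,v}$ with $|u|=|v|\le M$ (the germ bisections of the group generators being the case of level $0$). A product of $n$ elements of $\mathcal{S}'\cup(\mathcal{S}')^{-1}$ coincides, on each cylinder of level at most $M$ of its domain, with a map $vw\mapsto uh(w)$ where $|u|=|v|\le M$ and $l(h)\le Cn$; hence its germ at a point $w$ is determined by the level, the word $u$, and the germ of $h$ at the corresponding tail of $w$, and whether that germ is a unit is determined, exactly as in the paper's argument, by a prefix of $w$ of length about $M+\frac{\log Cn}{-\log\rho}$ together with the trace of the nucleus $\mathcal{N}$ on the tail. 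This yields $\overline\gamma(n,\mathcal{S}')\le C_1n^{\log|X|/(-\log\rho)}$ and $\delta(n,\mathcal{S}')\le C_2n^{\log|X|/(-\log\rho)}$ for every $\rho\in(\lambda,1)$, after which your Step~2 computation (Theorem~\ref{th:growth} plus subadditivity of $\limsup$ of logarithms and monotonicity of Gelfand--Kirillov dimension) goes through and gives the bound $\frac{2\log|X|}{-\log\lambda}$. Note also that Theorem~\ref{th:growth} does not require $\bigcup\mathcal{S}'$ to generate $\G$, so that part of your reduction can be dropped.
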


The image of the group ring $\Bbbk[G]$ in $\Bbbk[\G]$ is called the
\emph{thinned algebra}. It was defined in~\cite{sid:ring}, see also~\cite{bartholdi:ring}. 

Let us come back to the case of the Grigorchuk group. Since its contraction coefficient is equal to $1/2$, every finitely generated sub-algebra
of $\Bbbk[\G]$ has Gelfand-Kirillov dimension at most 2. It is easy to prove that it is actually equal to 2 in this case. Moreover, it
has quadratic growth, see~\cite{bartholdi:ring}.

This example is also an illustration of the non-Hausdorffness phenomenon.
The groupoid of germs of the Grigorchuk group is not Hausdorff: the germs $(b, 111\ldots)$, $(c, 111\ldots)$, $(d, 111\ldots)$, 
and $(1, 111\ldots)$ do not have disjoint neighborhoods. 

\begin{examp}
\label{ex:nonHausdorff}
Consider the convolution algebra $\mathbb{F}_2[\G]$ for the groupoid
of germs of the Grigorchuk group over the field with two elements.
The matrix recursion for the element $b+c+d+1$ is
\[b+c+d+1\mapsto\left(\begin{array}{cc}0 & 0 \\ 0 &
    b+c+d\end{array}\right).\] It follows that
$b+c+d$ is a non-trivial element of $\mathbb{F}_2[\G]$ but, as a
function on $\G$ is zero everywhere except for the germs of $b, c, d,
1$ at $111\ldots$,
where it is equal to 1. This shows that the ideal $I$ from
Proposition~\ref{pr:simple} is non-zero in this case, and the algebra
$\mathbb{F}_2[\G]$ is not simple.
\end{examp}

\subsection{Modules $\Bbbk\G_x$}

Let $\G$ be an \'etale minimal groupoid. Consider the
space $\Bbbk\G_x$ of maps $\phi:\G_x\arr\Bbbk$ with finite support, where
$\G_x=\{g\in\G\;:\;\be(g)=x\}$. It is easy to see that for every
$\phi\in\Bbbk\G_x$ and $f\in\Bbbk[\G]$ the convolution
$f\cdot\phi$ is an element of
$\Bbbk\G_x$, and that $\Bbbk\G_x$ is a left $\Bbbk[\G]$-module with
respect to the convolution.

\begin{proposition}
\label{pr:modules}
Let $\mathcal{S}$ be
an finite set of open compact $\G$-bisections, and let $V\subset\Bbbk[\G]$ be the
linear span of their characteristic functions and $1_{\G^{(0)}}$. Then
for every $n\ge 1$ we have
\[\dim V^n\cdot\delta_x\le\gamma_{\mathcal{S}}(x, n),\]
where $\delta_x\in\Bbbk\G_x$ is the characteristic function of
$x\in\G_x$, and $\gamma_{\mathcal{S}}(x, n)$ is the growth of the
Cayley graph based at $x$ of the groupoid generated by the union of
the elements of $\mathcal{S}$.

If the isotropy group of $x$ is trivial, then the module $\Bbbk\G_x$
is simple.
\end{proposition}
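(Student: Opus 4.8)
The plan is to establish the two claims separately. For the bound $\dim V^n\cdot\delta_x\le\gamma_{\mathcal S}(x,n)$, the key observation is that $V^n\cdot\delta_x$ lives inside $\Bbbk\G_x$, and that the support of $f\cdot\delta_x$ for $f\in V^n$ is concentrated on $B_{\mathcal S}(x,n)$. Indeed, $V^n$ is the linear span of the characteristic functions $1_F$ for $F\in\mathcal S^n$ (products of length at most $n$, after absorbing $1_{\G^{(0)}}$ into $\mathcal S$), and $1_F\cdot\delta_x$ is $\delta_{F\cdot x}$ if $x\in\be(F)$ and $0$ otherwise. Every element $F\cdot x$ obtained this way is a vertex of the Cayley graph $\G(x,\mathcal S)$ reachable from $x$ by a path of length at most $n$, hence lies in $B_{\mathcal S}(x,n)$. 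Therefore $V^n\cdot\delta_x$ is contained in the span of $\{\delta_g : g\in B_{\mathcal S}(x,n)\}$, a space of dimension $\gamma_{\mathcal S}(x,n)$. This gives the inequality at once; the routine point to check is that absorbing the unit into $\mathcal S$ only changes $\gamma_{\mathcal S}(x,n)$ and $\dim V^n$ by bounded factors, or alternatively one simply works with $\mathcal S\cup\{\G^{(0)}\}$ throughout so that paths of length exactly $n$ cover all shorter lengths.

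For simplicity of the module when the isotropy group of $x$ is trivial, the strategy is to show that any nonzero submodule $M\subseteq\Bbbk\G_x$ is all of $\Bbbk\G_x$. First I would reduce to showing $\delta_x\in M$: if $M$ contains any nonzero $\phi$, pick $g\in\G_x$ with $\phi(g)\ne 0$. Using that $\G$ is étale and the isotropy group of $x$ is trivial, the points $g'\in\G_x$ in the (finite) support of $\phi$ are separated by targets, so one can find an open compact bisection $F$ with $g\in F$ and $F^{-1}$ ``picking out'' only the coordinate $g$; concretely, take a clopen neighborhood $U$ of $\en(g)$ small enough that $1_{F^{-1}}\cdot 1_U\cdot\phi = \phi(g)\,\delta_x$, using triviality of isotropy to guarantee $F^{-1}$ is injective near the relevant points and that no other support element of $\phi$ maps into $U$ under the inverse bisections. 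This places $\delta_x\in M$. Then, since $\G$ is minimal, for every $g\in\G_x$ there is an open compact bisection $H$ with $\be(H)\ni x$... more precisely with $x\in\be(H)$ and $H\cdot x = g$, so $1_H\cdot\delta_x=\delta_g$, giving all $\delta_g\in M$ and hence $M=\Bbbk\G_x$.

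The main obstacle is the separation step: extracting the single coordinate $\delta_x$ from an arbitrary $\phi\in M$. The support of $\phi$ is a finite subset $\{g=g_1,g_2,\dots,g_k\}$ of $\G_x$, all with the same origin $x$ but a priori not with distinct targets in general groupoids. Triviality of the isotropy group of $x$ is exactly what forces $\en(g_i)=\en(g_j)$ to imply $g_i=g_j$ (since $g_j^{-1}g_i$ would lie in the isotropy group of $x$), so the targets $\en(g_i)$ are genuinely distinct points of $\G^{(0)}$; then total disconnectedness of $\G^{(0)}$ lets us separate $\en(g)$ from the other $\en(g_i)$ by a clopen set $U$, and choosing an open compact bisection $F\ni g$ with $\en(F)\subseteq U$ (shrinking if necessary) gives $1_{F^{-1}}\cdot 1_U\cdot 1_F = 1_{\be(F)}$ while killing the contributions of the other $g_i$. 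One must be slightly careful that $1_{F^{-1}}\cdot\phi$ need not be supported at $x$ alone until one also restricts by $1_{\be(F)}$ or by a further clopen neighborhood of $x$; composing with $1_{\{x\}}$... rather, with $1_W$ for a small enough clopen $W\ni x$ contained in $\be(F)$ and disjoint from the origins of any spurious surviving terms — but those origins are all $x$ as well, so in fact one argues that after applying $1_{F^{-1}}$ the only surviving term with origin $x$ is the one coming from $g$, precisely because the $\en(g_i)$ were separated. Assembling these clopen choices carefully is the only delicate part; everything else is formal.
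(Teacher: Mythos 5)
Your proposal is correct and follows essentially the same route as the paper: the growth bound via $1_F\cdot\delta_x=\delta_{F\cdot x}$ landing in the ball $B_{\mathcal S}(x,n)$, and simplicity by using triviality of the isotropy group to separate the targets of the support of $\phi$ with open compact bisections, so that one bisection inverse extracts $\phi(g)\delta_x$ and bisections through each $g\in\G_x$ recover all of $\Bbbk\G_x$ from $\delta_x$. The only cosmetic slip is attributing the existence of a bisection $H$ with $H\cdot x=g$ to minimality; it follows simply from $\G$ being \'etale (every element lies in an open compact bisection), exactly as in the paper's argument.
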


\begin{proof}
The growth estimate is obvious, since for every $g\in\G_x$ and
$S\in\mathcal{S}$ we have $1_S\cdot\delta_g=\delta_{Sg}$, if
$Sg\ne\emptyset$, and $1_S\cdot\delta_g=0$ otherwise.

Let us show that $\Bbbk\G_x$ is simple if the isotropy group of $x$ is
trivial. It is enough to show that for
every non-zero element
$\phi\in\Bbbk\G_x$ there exist elements $f_1, f_2\in\Bbbk[\G]$
such that $f_1\cdot\phi=\delta_x$ and $f_2\cdot\delta_x=\phi$.

Let $\phi\in\Bbbk\G_x$, and let $\{g_1, g_2, \ldots, g_k\}$ be the
support of $\phi$. Since the isotropy group of $x$ is trivial,
$\en(g_i)$ are pairwise different. Let $U_1, U_2, \ldots, U_k$ be open
compact $\G$-bisections such that $g_i\in U_i$ and $\en(U_i)$ are
disjoint. Then $\left(\sum_{i=1}^k\phi(g_i)1_{U_i}\right)\cdot
\delta_x=\phi$ and $\phi(g_1)^{-1}1_{U_1^{-1}}\phi=\delta_x$.
\end{proof}

\begin{examp}
\label{ex:matrices}
Let $X$ be a finite alphabet, and let $w\in X^{\Z}$ be a non-periodic
sequence such that closure $\X_w$ of the shift orbit of $w$ is
minimal.  Let $\mathfrak{S}$
be the groupoid generated by the action of the shift on $\X_w$. Denote
by $T$ and $T^{-1}$ the characteristic functions of the sets of germs
of the shift and its
inverse, and for every $x\in X$, denote by $D_x$ the characteristic
function of the cylindrical set $\{w\in\X_w\;:\;w(0)=x\}$. Then
$\Bbbk[\mathcal{S}]$ is generated by $T, T^{-1}$ and $D_x$ for $x\in
X$. Note that we can remove one of the generators $D_x$, since
$\sum_{x\in X}D_x=1=TT^{-1}$. Consider the set
$\mathfrak{S}_w=\{(s^n, w)\;:\;n\in\Z\}$ and the corresponding module
$\Bbbk\mathfrak{S}_w$. Its basis as a $\Bbbk$-vector space consists of
the delta-functions $e_n=\delta_{(s^n, w)}$, $n\in\Z$. In this
naturally ordered basis left
multiplication by $T$ is given by the
matrix \[T=\left(\begin{array}{cccccc}\ddots & \vdots & \vdots &
    \vdots & \vdots &  \\ \cdots & 0 & 0 & 0 & 0 & \cdots \\ \cdots & 1 & 0 &
    0 & 0 & \cdots \\ \cdots & 0 & 1 & 0 & 0 & \cdots \\  
\cdots & 0 & 0 & 1 & 0 & \cdots \\
& \vdots &
    \vdots &
    \vdots & \vdots & \ddots\end{array}\right)=(t_{ij})_{i\in\Z,
  j\in\Z}\]
with the entries $t_{m, n}=\delta_{m-1, n}$. The element $T^{-1}$ is given
by the transposed matrix, and an element $D_x$ is given by the
diagonal matrix $(a_{ij})$ with entries given by the rule
\[a_{nn}=\left\{\begin{array}{ll}1 & \text{if $w(n)=x$,}\\ 0 &
    \text{otherwise.}\end{array}\right.\]
It follows that the algebra $\Bbbk[\mathfrak{S}]$ is isomorphic to the
algebra generated by such matrices. For example, if $X=\{0, 1\}$, then
the algebra is generated by the matrices $T$, $T^\top$, and the
diagonal matrix with the sequence $w$ on the diagonal.
\end{examp}

\def\cprime{$'$}\def\ocirc#1{\ifmmode\setbox0=\hbox{$#1$}\dimen0=\ht0
  \advance\dimen0 by1pt\rlap{\hbox to\wd0{\hss\raise\dimen0
  \hbox{\hskip.2em$\scriptscriptstyle\circ$}\hss}}#1\else {\accent"17 #1}\fi}
  \def\cprime{$'$}

\end{document}